\newcommand{\JJ}{\textbf{J}}
\newcommand{\K}{\textbf{K}}
\newcommand{\N}{\textbf{N}}
\newcommand{\R}{\mathbb{R}}
\newcommand{\SSS}{\mathcal{S}}
\newcommand{\EE}{\mathbb{E}}
\newcommand{\NN}{\mathbb{N}}
\newcommand{\phat}{\hat{p}}
\newcommand{\yhat}{\hat{y}}
\newcommand{\ybar}{\bar{y}}
\newcommand{\pbar}{\bar{p}}
\newcommand{\ptil}{\tilde{p}}
\newcommand{\lamhat}{\hat{\lambda}}
\newcommand{\Omegaline}{\underline{\Omega}}
\newcommand{\ds}{\displaystyle}
\newcommand{\PP}{\mathbb{P}}
\newcommand{\MISE}{\mbox{\rm MISE}}
\newcommand{\EXP}{\mathbb{E}}
\definecolor{mblue}{rgb}{0,0,1}
\definecolor{cmtcol}{rgb}{0.5,0,1}
\definecolor{dbtcol}{rgb}{0.75,0.0,0.0}
\title{Asymptotic properties and approximation of Bayesian logspline density estimators for communication-free parallel computing methods}
\author{ Konstandinos Kotsiopoulos\thanks{University of Massachusetts Amherst,  kkotsiop@gmail.com, ORCID:0000-0003-2651-0087}, Alexey Miroshnikov\thanks{University of Massachusetts Amherst, University of California Los Angeles, amiroshn@gmail.com, ORCID:0000-0003-2669-6336} 
and Erin Conlon\thanks{University of Massachusetts Amherst, econlon@umass.edu} }
\newcommand{\dist}{{\mbox{\rm dist}}}
\newcommand{\RR}{\mathbb{R}}
\newcommand{\eps}{\varepsilon}
\def\clap#1{\hbox to 0pt{\hss#1\hss}}
\newtheorem{theorem}{Theorem}[section]
\newtheorem{definition}{Definition}[section]
\newtheorem{lemma}{Lemma}[section]
\newtheorem{remark}{Remark}[section]
\newtheorem{proposition}{Proposition}[section]
\newcommand{\vertiii}[1]{{\left\vert\kern-0.25ex\left\vert\kern-0.25ex\left\vert #1
\right\vert\kern-0.25ex\right\vert\kern-0.25ex\right\vert}}
\date{}
\begin{document}
	
	\maketitle
	
\abstract
{
	In this article we perform an asymptotic analysis of parallel Bayesian logspline density estimators. Such estimators are useful for the analysis of datasets that are partitioned into subsets and stored in separate databases without the capability of accessing the full dataset from a single computer. The parallel estimator we introduce is in the spirit of a kernel density estimator introduced in recent studies. We provide a numerical procedure that produces the normalized density estimator itself in place of the sampling algorithm. We then derive an error bound for the mean integrated squared error of the full dataset posterior estimator. The error bound depends upon the parameters that arise in logspline density estimation and the numerical approximation procedure. In our analysis, we identify the choices for the parameters that result in the error bound scaling optimally in relation to the number of samples. This provides our method with increased estimation accuracy, while also minimizing the computational cost.
	}
	
	\vspace{11pt}
	
	{{\bf Keywords:} logspline density estimation, asymptotic properties, parallel algorithms.}
	
	\vspace{11pt}
	
	{{\bf Mathematics Subject Classification (2010): }62G07, 62G20, 68W10.}
	
	%\numberwithin{equation}{subsection}
	%\numberwithin{theorem}{subsection}
	% \tableofcontents

\section{Introduction}

The recent advances in data science and big data research have brought challenges in analyzing large data sets in full. In particular, carrying out an analysis may be time-consuming given the size of the dataset. More importantly, datasets that constitute one large dataset may be scattered across different locations. Occurrences like this may happen in several industries. For example, medical institutions keep their data private and merging these datasets into one is often not possible. This motivates us to investigate algorithms that extract information from each individual subset and combine the results to gain insight into the larger dataset. 

A proposed framework for such an analysis is through recently developed parallel computing methods. One such approach assumes that a given dataset is partitioned into a number of subsets, where each subset is analyzed on a separate machine using parallel Markov chain Monte Carlo (MCMC) methods \cite{Langford,Newman,Smola}. In these methods, it is assumed that the subsets are stored in the same location and can be accessed by any processor. Furthermore, communication between machines is required for each MCMC iteration, which can be quite costly.

Due to the limitations of requiring datasets to be fully stored in one location and the limitations of methods that require communication between machines, a number of alternative communication-free parallel MCMC methods have been developed for Bayesian analysis of big data. For these approaches, Bayesian MCMC analysis is performed on each subset independently, and the subset posterior samples are combined to estimate the full data posterior distributions; see \cite{Neiswanger,Wang,Scott2016,Miroshnikov-Conlon}.

For the approaches introduced in the above literature, the full dataset posterior $p(\theta|\textbf{x})$ is reconstructed by estimating posterior densities $p_m(\theta) = p(\theta|\textbf{x}_{m})$ as follows
\begin{equation*}\label{fulldataintro}
	p(\theta|\textbf{x})\propto \ds\prod_{m=1}^{M}p_{m}(\theta)=: p^*(\theta)\,, \quad \text{where} \quad p_{m}(\theta):=p(\theta|\textbf{x}_{m})=p(\theta)^{1/M}p(\textbf{x}_m | \theta).
\end{equation*}
Here, $\theta=(\theta_1,\theta_2,\dots,\theta_d)$ are the model parameters, and $\textbf{x} = \{\textbf{x}_1, \textbf{x}_2,\dots, \textbf{x}_M\}$ is the full data set partitioned into $M$ disjoint independent subsets. Setting $c=\int p^*(\theta) \, d\theta$, we have $p(\theta|\textbf{x})=cp^*(\theta)$.

The algorithm of Neiswanger et al. \cite{Neiswanger} first approximates each subset posterior density in parallel using kernel density estimation \cite{Silverman} and then builds the estimator for the full data posterior by multiplying together the subset posterior estimators
\begin{equation}\label{fullestintro}
	\hat{p}(\theta|\textbf{x})\propto \hat{p}^*(\theta|\textbf{x}):= \hat{p}_{1}(\theta|\textbf{x}_1)\cdot \hat{p}_{2}(\theta|\textbf{x}_2) \dots \hat{p}_{M}(\theta|\textbf{x}_M),
\end{equation}
where $\hat{p}_m(\theta|\textbf{x}_m)$ is the kernel density estimator of $p_m(\theta|\textbf{x}_m)$.

Neiswanger et al. \cite{Neiswanger} show that the estimator \eqref{fullestintro} is asymptotically exact and the authors develop a sampling algorithm that generates samples from the distribution approximating the full dataset estimator. Similar sampling algorithms were presented and investigated in Wang and Dunson \cite{Wang}, Scott et al. (2016) \cite{Scott2016}, and Scott (2017) \cite{Scott2017}. It has been noted that these algorithms do not perform well for posteriors that have non-Gaussian shape (in practice, this occurs when either the prior is non-Gaussian and/or some of the data subsets are small) and are sensitive to the choice of the kernel parameters; see Miroshnikov et al. \cite{Miroshnikov-Conlon}. The works of Neiswanger et al. \cite{Neiswanger}, and Wang and Dunson \cite{Wang} provide strong theoretical guarantees as the authors provide the bounds for the estimation errors for the unnormalized densities, while omitting the investigation of the normalization constant.

Another important development is the article of Miroshnikov and Savelev \cite{Miroshnikov-Savelev}. This work focuses on the case $M>1$ and $d=1$ and carries out the asymptotic analysis of the parallel Bayesian kernel density estimator of the form \eqref{fullestintro} by performing an expansion of the mean integrated squared error (MISE) for both unnormalized and normalized density estimators $\phat^*$ and $\phat$, respectively, as the number of subset posterior samples $N = (N_1, N_2,\dots, N_M) \to \infty$. This expansion allows the authors to investigate the optimality of kernel bandwidth parameters, which aids in improving the estimation accuracy.  The analysis in \cite{Miroshnikov-Savelev} can be carried out to the case $d>0$, when the Bayesian parameters $\theta=(\theta_1,\dots,\theta_d)$ are independent. In this case,  the representation \eqref{fullestintro} for $\theta$ holds for each individual parameter $\theta_i$ and, as a consequence, one can construct the estimator for the joint posterior density using one-dimensional full posterior estimators.

The aforementioned papers focus on parallel Bayesian estimators in the form \eqref{fullestintro}, that are based on kernel density estimators, which are known to be sensitive to the choice of bandwidth parameters and, hence, less robust. For this reason, in this article we replace the kernel density estimators $\phat_m(\theta)$ with logspline estimators (where the information on $\textbf{x}$ is suppressed), which are known to perform well for asymmetric shapes; we then carry out a similar asymptotic analysis to that of Miroshnikov and Savelev \cite{Miroshnikov-Savelev}, based on the theory of logspline estimation of Stone \cite{Stone89,Stone90} and Stone and Koo \cite{StoneKoo} (which corresponds to $M=1$, $d=1$). These estimates depend on the $N_m$ number of samples drawn from the subset posterior density, the $K_m+1$ number of knots for the $k$-order B-splines, and the smallest knot distance $h_m$. As in \cite{Miroshnikov-Savelev}, we focus on the case of multiple subsets $M>1$ and $d=1$, where the analysis can be extended to $d>1$ under the assumption of independence of parameters $\{\theta_i\}_{i=1}^d$. Our work is partially motivated by Miroshnikov et al. \cite{Miroshnikov-Conlon}, where the authors focused on practical applications of logspline estimators in the form \eqref{fullestintro} to real-world datasets and illustrate that the method performs well for asymmetric posterior densities (reconstructing densities marginally for each parameter $\theta_i$). We also conduct numerical experiments to verify the theoretical results. The numerical comparison with other similar methods, however, is outside the scope of this paper and is the focus of future research.

The estimated posterior $\phat^{*}$ is unnormalized, which motivates one to define the normalization constant $\hat{c}=\int \phat^{*}(\theta)\, d\theta$. The normalized posterior estimator  $\phat$ then is given by $\phat(\theta)=\hat{c}^{-1}\phat^{*}(\theta)$ which is one of the points of interest in our work. Computing the normalization constant analytically is a difficult task since the subset posterior densities are not explicitly calculated.

The aim in our paper is to conduct an asymptotic analysis of the full density estimator $\phat(\theta)$, where the normalization constant needs to be addressed. In particular, our main objective is to establish an error estimate between the posterior density $p$ and its approximation $\phat$ via the mean integrated squared error (MISE) given by 
\begin{equation*}%\label{MISEdef}
{\MISE}(p,\phat):=\mathbb{E}\int \big(p(\theta)-\phat(\theta) \big) ^{2}d\theta.
\end{equation*}
Below is the summary of our main results:

\vspace{10 pt}

\begin{itemize}

		\item 

		 We estimate the MISE between the functions $\phat^{*}$ and $p^{*}$ by adapting the estimation techniques introduced in \cite{Stone89,Stone90}. Under optimal scaling of the number of knots with respect to the number of samples, we obtain the estimate
	\[
	\MISE(p^{*},\phat^{*})=O\left(M^{2-2\beta_{opt}}\|\N\|^{-2\beta_{opt}}\right), \quad \N=(N_1,\dots,N_M),
	\]
	where the parameter $\beta_{opt}=q/(2q+1)$; see Theorem  \ref{thm::bound_unnorm_est} below.

	\item 	To establish an estimate for the difference  between the normalized densities $\phat$ and $p$, we first carry out an analysis for the normalization constant $\hat{c}$, which gives
	\[
		\left| \frac{c}{\hat{c}(\omega)} - 1 \right|=O\left(M^{1-\beta_{opt}}\|\N\|^{-\beta_{opt}}\right).
	\]
This leads to the following desired estimate (see Theorem \ref{MISEpphat} below)
	\[
		\MISE(p,\phat)=O\left(M^{2-2\beta_{opt}}\|\N\|^{-2\beta_{opt}}\right).
	\]

\item Computing the constant $\hat{c}$ analytically is a difficult task since integrating the product of subset posterior density estimators explicitly is challenging. To circumvent this issue, we introduce an estimator $\ptil^*$ which is an interpolated version of $\phat^*$. This unnormalized estimator is straightforward to integrate numerically, which can provide us with the normalization constant $\tilde{c}$ of $\ptil^*$, which yields the full posterior estimator $\ptil=\tilde{c}^{-1}\ptil^*$.

We carry out a similar error analysis for $\phat$ and $\tilde{p}$, which leads to the final result for MISE between $p$ and $\ptil$ is given by
\begin{equation*}
	\MISE (p\,,\ptil)=O\left(M^{2-2\beta_{opt}}\|\N\|^{-2\beta_{opt}}\right),
\end{equation*}
where $l$ is the order of polynomials used to interpolate $\phat$, and the optimal interpolation step $\Delta x$ is chosen to scale optimally as $O\bigg( \|\N\|^{-\beta_{opt}\left(\frac{1}{l+1}+\frac{1}{q}\right)}\bigg)$; see Theorem \ref{MISEestthm} below.

\end{itemize}

The paper is arranged as follows. In Section \ref{section:intro material} we present the basics of logspline density estimation and notions of distance from splines. In Section \ref{section:parest} we introduce the parallel logspline estimators for the posterior density. In Section \ref{section:main_res} we state the main results that provide the error bounds for the unnormalized and normalized parallel estimators. In Section \ref{section::proofs} we provide the proofs of the main results via an asymptotic expansion of MISE. In Section \ref{section:num_exp} we showcase our method on both simulated and real-world datasets and discuss the results. Finally, in the Appendix we introduce notation and hypotheses that form the foundation of the analysis, as well as supplementary material.

\section{Preliminaries}\label{section:intro material}

	Here we provide all of the relevant results related to B-splines and logspline density estimators based on the works of \cite{de Boor,StoneKoo,Stone89,Stone90}.

	\subsection{Logspline family of estimators}\label{subsec:log_fam}
	In this part we will present the method for constructing logspline density estimators using B-splines. The reader is referred to the appendix for more information on B-splines. Let $p$ be a continuous probability density function supported on an interval $[a,b]$. Suppose $p$ is unknown and we would like to construct density estimators for this function. We start by introducing the reader to the concept of an exponential family of distributions \cite{Lehmann} since logspline density estimators are derived from such a family.
	
	\begin{definition}\label{exp_family}
		A family $\{P_y\}$ of distributions forms an $s$-dimensional exponential family if each distribution $P_y$ has a density $p_y$ of the form
		\begin{equation}\label{exp_fam_dens}
			p_y(x) = h(x) \exp\Big({\ds\sum_{j=0}^{s-1}\eta_{j}(y)T_{j}(x)-c(y)}\Big)
		\end{equation}
		where $y$ can be real or vector valued and $h,c$ and $\eta_j,T_j, \ j\in\{0,1,\dots,s-1\}$ are all mappings into $\R$.
	\end{definition}

	\noindent To define a logspline family of estimators based on \eqref{exp_fam_dens}, we set the functions $T_j$ to be B-splines, $h$ to be identically 1, $c$ to be the logarithm of the normalization constant and each $\eta_j$ to be the projection on the $j$-th component of the vector valued parameter $y$, as explained in the next definition.
	
	\begin{definition}\label{LogsplineEst}
		Let $T_{N}=(t_{i})_{i=0}^{N}$, $N\in \NN$, be a sequence of knots such that $t_{0}=\dots=t_{k-1}=a$ and $t_{N-k+1}=\dots=t_{N}=b$, where $1\leq k \leq N$, $k$ fixed. Thus, the set of splines $S_{k,T_{N}}$ of order $k$ generated by the B-splines $B_{j,k,T_{N}}$ can be obtained. We suppress the parameters $k, T_{N}$ and just write $B_{j}$ instead of $B_{j,k,T_{N}}$.
		Define the spline function
		\begin{equation*}%\label{Bsplinesum}
		B(\theta;y) = \ds \sum_{j=0}^{J-1}y_{j}B_{j}(\theta)\,, \quad y=(y_{0},\dots,y_{J-1}) \in \R^{J} \quad \text{with} \ J:=N-k+1.
		\end{equation*}
		and for each $y$ we set the probability density function
		\begin{equation}\label{logmodel1}
		\begin{aligned}
		f(\theta;y)&=\exp\Big({\ds\sum_{j=0}^{J-1}y_{j}B_{j}(\theta)-c(y)}\Big)=\exp\Big(B(\theta; y)-c(y)\Big)\,, \\[3pt]
		\text{where} \quad c(y)&=\log\left(\int_{a}^{b}\exp\Big(\sum_{j=0}^{J-1} y_j B_j(\theta)\Big) d\theta \right)<\infty\,.
		\end{aligned}
		\end{equation}
	\end{definition}

	\noindent The family of exponential densities $\{f(\theta;y): \ y\in \R^{J}\}$ is not identifiable since if $\beta$ is any constant, then $c((y_{0}+\beta,\dots,y_{J-1}+\beta))=c(y)+\beta$ and thus
	\begin{equation*}
	f(\theta;(y_{0}+\beta,\dots,y_{J-1}+\beta))=f(\theta;y)
	\end{equation*}
	To make the family identifiable we restrict the vectors $y$ to the set
	\begin{equation*}
	Y_{0}=\left\{ y\in \R^{J} : \sum_{i=0}^{J-1}y_{i}=0 \right\}.
	\end{equation*}
	
	\begin{remark}
		$Y_{0} $ depends only on the number of knots and the order of the B-splines and not the number of samples.
	\end{remark}
	
	\begin{definition}
		We define the logspline model as the family of estimators
		\begin{equation*}
		\mathcal{L}=\big\{f(\theta;y) \ \text{given by \eqref{logmodel1}}: \ y\in Y_{0}\big\}.
		\end{equation*}
		For any $f\in \mathcal{L}$
		\begin{equation*}
		\log{(f)}=\sum_{j=0}^{J-1}y_{j}B_{j}(\theta)-c(y)\in S_{k,T_{N}}.
		\end{equation*}
	\end{definition}
	
	Next, let us pick a set of independent, identically distributed random variables
	\[
	\Theta_n= \big( \theta_{1},\theta_{2},...,\theta_{n} \big) \in \R^n, \ n\in \NN
	\]
	where each $\theta_i$  is drawn from a distribution that has density $p(\theta)$.
	
	\noindent We next define the log-likelihood function $l_n:\R^{J+n} \to \R$ corresponding to the logspline model by
	\begin{equation}\label{logl1}
	\begin{aligned}
	l_n(y) &= l_n(y;\theta_1,\theta,\dots,\theta_n) = l_n(y;\Theta_n) \\
	& =\sum_{i=1}^{n}\log(f(\theta_{i};y))=\sum_{i=1}^{n}    \bigg(\sum_{j=0}^{J-1}y_{j}B_{j}(\theta_i)\bigg) -nc(y)\,, \quad y \in Y_0
	\end{aligned}
	\end{equation}
	and the maximizer of the log-likelihood $l_{n}(y)$ by
	\begin{equation}\label{argmaxlhood}
	\yhat_n= \yhat_n(\theta_1,\dots,\theta_n)= \ds \arg \max_{y\in Y_{0}}l_{n}(y)
	\end{equation}
	whenever this random variable exists, which will be shown on a subset of the sample space whose probability will tend to 1. The density $f( \, \cdot \, ;\yhat_{n})$ is called the \textit{logspline density estimator} of $p$.
	
	We define the expected log-likelihood function  $\lambda_{n}(y)$ by
	\begin{equation}\label{explog1}
	\lambda_{n}(y)=\EE[l(y;\theta_1,\dots,\theta_{n})]=n\left(-c(y)+\int_{a}^{b}\bigg(\sum_{j=0}^{J-1}y_{j}B_{j}(\theta)\bigg)p(\theta) \, d\theta \right) <\infty\,,  \quad y \in Y_{0}.
	\end{equation}
	It follows by a convexity argument that the expected log-likelihood function
	has a unique maximizing value
	\begin{equation*}%\label{estdef}
	\ybar=\ds \arg \max_{y\in Y_{0}}\lambda_{n}(y)=\ds \arg \max_{y\in Y_{0}}\frac{\lambda_{n}(y)}{n}
	\end{equation*}
	which is independent of $n$ but depends on the knots.		
	
	\par
	
	Note that the function $\lambda_{n}(y)$ is bounded above and goes to $-\infty$ as $|y| \to \infty$ within $Y_0$ and  therefore, due to Jensen's Inequality, the constant $\bar{y}$ is finite; see Stone \cite{Stone90}.  The  estimator $\yhat(\theta_1,\dots,\theta_n)$, in general does not exist. This motivates us to define the set
	\begin{equation}\label{omegan}
	\Omega_n = \bigg\{ \omega \in \Omega:  \yhat=\yhat(\theta_1,\dots,\theta_n) \in \R^{J} \;\; \text{exists} \bigg\}.
	\end{equation}
	In what follows we will show that $\PP(\Omega_n) \to 1$ as $n \to \infty$.  We also note that due to convexity of $l_n(y)$ and $\lambda_n(y)$ the estimators  $\yhat$ and $\ybar$ are unique whenever they exist.

	We define the logspline estimator $\phat$ of $p$ on the space $\Omega_n$ by
	\begin{equation}\label{phatdef}
	\phat: \R\times \Omega_n \quad \text{defined by} \quad \phat(\theta,\omega)=f(\theta, \yhat(\theta_1,\dots,\theta_n)), \ \omega \in \Omega_n
	\end{equation}
	and  define the function
	\begin{equation}\label{pbardef}
	\pbar(\theta):=f(\theta,\ybar)\,,
	\end{equation}
	called an {\it intermediate} estimator of $p$.
	
	\begin{remark}
		In order for the maximum likelihood estimates to be reliable, we require that the modeling error tend to 0 as $n\to \infty$, as described in hypothesis \eqref{hyp2:Lchoice}.
	\end{remark}
	
	\subsection{Notions of distance from the set of splines $S_{k,t}$}
	
	It is a well known fact that continuous functions can be approximated by polynomials. The set of splines $S_{k,t}$ has been introduced in the appendix (Definition \ref{Bsplinespace}) and from what we have stated in remark \ref{Bsplinedense}, that $\bigcup_{N\in \NN}S_{k,T_{N}}$ is dense in the space of continuous functions, there is a question that arises at this point:
	
	Given an arbitrary continuous function $g$ on $[a,b]$, an integer $k\geq 1$ and a set of knots $T_{N}=(t_{i})_{i=0}^{N}$ as in Remark \ref{Bsplinedense}, how close is $g$ to the set $S_{k,T_{N}}$ of splines of order $k$?
	
	We would like to have a bound for the sup-norm distance between $g\in C[a,b]$ and $S_{k,T_{N}}$, where this distance is denoted by $dist(g,S_{k,T_{N}})$ and is defined as
	\begin{equation*}
	\dist(g,S_{k,T_{N}})=\inf_{s\in S_{k,T_{N}}}\|g-s\|_{\infty}, \quad g\in C[a,b].
	\end{equation*}
	The answer to our question is given by Jackson's Theorem found in de Boor \cite{de Boor}. To state it we first need the following definition.
	
	\begin{definition}\label{modcont}
		The modulus of continuity $\omega(g;h)$ of some function $g\in C[a,b]$ for some positive number $h$ is defined as
		\begin{equation*}
		\omega(g;h)=\max\{|g(\theta_1)-g(\theta_2)|: \ \theta_1, \theta_2\in [a,b], \ |\theta_1-\theta_2|\leq h\}.
		\end{equation*}
	\end{definition}

	\noindent The bound given by Jackson's Theorem contains the modulus of continuity of the function whose sup-norm distance we want to estimate from the set of splines. The theorem is stated below.
	
	\begin{theorem}\label{Jackson}
		Let $T_{N}=(t_{i})_{i=0}^{N}$, $N\in \NN$, be a sequence of knots such that $t_{0}=\dots=t_{k-1}=a$ and $b=t_{N-k+1}=\dots=t_{N}$, where $1\leq k\leq N$. Let $S_{k,T_{N}}$ be the set of splines as in definition \ref{Bsplinespace} for the knot sequence $T_{N}$. For each $j\in \{0,\dots,k-1\}$, there exists $C=C(k,j)$ such that for $g\in C^{j}[a,b]$
		\begin{equation*}
		dist(g,S_{k,T_{N}})\leq C \ h^{j} \ \omega\left( \frac{d^{j}g}{d\theta^{j}};|t| \right) \quad \text{where} \quad h=\max_{i}|t_{i+1}-t_{i}|.
		\end{equation*}
		In particular, from the Mean Value Theorem it follows
		\begin{equation}\label{Jacksonbound}
		dist(g,S_{k,T_{N}})\leq C \ h^{j+1} \ \left\| \frac{d^{j+1}g}{d\theta^{j+1}} \right\|_{\infty}
		\end{equation}
		in the case that $g\in C^{j+1}[a,b]$.
	\end{theorem}
	
	\begin{remark}
		Please note that for the approximation the mesh size enters into the bound in \eqref{Jacksonbound} which dictates the placement for the knots.
	\end{remark}

Jackson's Theorem supplies us with an error bound for the approximation of a continuous function by splines. In this paper we are interested in estimates for probability densities. For this reason we state the results specifically for densities provided by Stone \cite{Stone89}.
	
\begin{definition}\label{def::densfamily}
Suppose that $p$ is a continuous probability density supported on some interval $[a,b]$, similar to the set-up when we defined the logspline density estimation method. Define the families of probability densities 
\begin{equation*}%\label{densfamily}
\begin{aligned}
\mathcal{F}_{p}&=\left\{ p_{\alpha}: \ p_{\alpha}(x)=\frac{(p(x))^{\alpha}}{\int (p(y))^{\alpha}\ dy}, \ 0\leq \alpha \leq 1 \right\}\\
\mathcal{F}_{p}^{log} & = \{ \log{(u)}: \ u\in \mathcal{F}_{p} \}.
\end{aligned}
\end{equation*}

\end{definition}

	\begin{lemma}\label{equifamily}
		The family $\mathcal{F}_{p}^{log}$ in Definition \ref{def::densfamily} is equicontinuous  on the set $\{ \theta: \ p(\theta)>0 \}$.
	\end{lemma}
	
	\begin{proof}
		Pick $\epsilon>0$. There exists $\delta>0$ such that $|\log{(p(x))}-\log{(p(y))}|<\epsilon$ whenever $|x-y|<\delta$. Pick any $\alpha\in [0,1)$. If $\alpha=0$ then $p_{0}$ is just a constant and thus $|\log{(p_{0}(x))}-\log{(p_{0}(y))}|=0<\epsilon$. If $0<\alpha<1$, then $|\log{(p_{\alpha}(x))}-\log{(p_{\alpha}(y))}|=|\alpha\log{(p(x))}-\alpha\log{(p(y))}|<\alpha \ \epsilon<\epsilon$.
	\end{proof}
	
	\begin{remark}
		It is practical to work with $p(x)>0$ on the set $[a,b]$ and this is what we assume until the end of the manuscript. In this case, $\log{(p)}\in C[a,b]$.
	\end{remark}
	
	\begin{remark}
		We will be using the notation $\bar{h}=\max_{i}|t_{i+1}-t_{i}|$ and $\underline{h}=\min_{i}|t_{i+1}-t_{i}|$, and $\gamma(T_{N})=\bar{h}/\underline{h}$.
	\end{remark}
	
	We can apply the logspline estimation method to $p$. Let $\pbar$ be defined as in \eqref{pbardef}, the density estimate given by maximizing the expected log-likelihood. We then have the following lemma:
	
	\begin{lemma}\label{pbarsupnormbound}
		Suppose $p$ is a continuous density function supported on $[a,b]$ and $\pbar$ is as in \eqref{pbardef}. Then there exists constant $R=R(\mathcal{F}_{p},k,\gamma(T_{N}))$ that depends on the family $\mathcal{F}_p$, order $k$ and global mesh ratio $\gamma(T_{N})$ of $S_{k,T_{N}}$ such that
		\begin{equation*}
		\|\log{(p)}-\log{(\pbar)}\|_{\infty}\leq R \ \dist(\log({p}),S_{k,T_{N}})
		\end{equation*}
		and therefore
		\begin{equation*}
		\|p-\pbar\|_{\infty}\leq \big(\exp\{R \ \dist(\log({p}),S_{k,T_{N}})\}-1\big)\|p\|_{\infty}.
		\end{equation*}
		Moreover, if $\log({p})\in C^{j+1}([a,b])$ for some $j\in \{0,\dots,k-1\}$ then by Jackson's Theorem we obtain
		\begin{equation*}
		\begin{aligned}
		\|\log{(p)}-\log{(\pbar)}\|_{\infty}&\leq R \ C(k,j) \ \bar{h}^{j+1} \ \left\| \frac{d^{j+1}\log({p})}{d\theta^{j+1}} \right\|_{\infty} \\
		\|p-\pbar\|_{\infty}&\leq \left(\exp\left\{R \ C(k,j) \ \bar{h}^{j+1} \ \left\| \frac{d^{j+1}\log({p})}{d\theta^{j+1}} \right\|_{\infty}\right\}-1\right)\|p\|_{\infty}.
		\end{aligned}
		\end{equation*}
	\end{lemma} 
	
	\begin{remark}
		Please note that the constant $R$ does not depend on the dimension of $S_{k,T_{N}}$. For all practical purposes, we will be using uniformly placed knots, thus suppressing the dependence on $\gamma(T_{N})$, which will be equal to the constant 1.
	\end{remark}
	
	Now we will present certain error bounds required to calculate a bound for {\MISE}. Assume $p,\hat{p}$ and $\pbar$ are as in subsection \ref{subsec:log_fam}. Also assume that $n$ is the number of random samples drawn from $p$.
	
	We will state a series of definitions and theorems that encompass the results from  Lemma 5, Lemma 6, Lemma 7, and Lemma 8 in the work of  Stone\cite{Stone90}[p.728-729].
	\begin{definition}
		Let $y \in Y_{0}$ and $f\in \mathcal{L}$. Let $l_n$ and $\lambda_{n}$ be defined as in \eqref{logl1} and \eqref{explog1}, respectively. For any $n \geq 1$ and $b>0$ we define the set
		\begin{equation*}\label{boundset}
		\begin{aligned}
		A_{n,b}(y) & =\Bigg\{\omega\in \Omega: \left|l(y;\Theta_n(\omega))-l(\ybar;\Theta_n(\omega))-(\lambda_{n}(y)-\lambda_{n}(\ybar))\right| \\
		&\qquad \qquad \qquad <
		nb \bigg( \int | \log(f(\theta;y)) - \log(f(\theta; \ybar)) |^2 d \, \theta \bigg) ^{1	/2}\Bigg\}\,.
		\end{aligned}
		\end{equation*}
	\end{definition}
	
	\par\smallskip
	
	\begin{definition}
		Given $n \geq 1$ and $\epsilon>0$ we define $E_{\epsilon,n}$ to be the subset of ${\cal{F}}=\{ f(\cdot \ ;y) : y\in Y_{0} \}$ such that
		\begin{equation*}
		E_{\epsilon,n}=\left\{ f(\cdot \ ;y) :y\in Y_{0} \ \text{and} \ \bigg( \int | \log(f(\theta;y)) - \log(f(\theta; \ybar)) |^2 d \, \theta \bigg) ^{1/2} \leq n^{\epsilon}\sqrt{\frac{J}{n}} \right\}\,.
		\end{equation*}
	\end{definition}
	
	\par\smallskip

	\begin{lemma}[{Stone\cite{Stone90}[p.728]}] For each $y_1,y_2 \in Y_0$ and $\omega\in \Omega$ we have
		\begin{equation*}
		\left|l(y_1;\Theta_n(\omega))-l(y_2;\Theta_n(\omega))-(\lambda_{n}(y_1)-\lambda_{n}(y_2))\right| \leq
		2n\| \log{f(\cdot \ ;y_1)}-\log{f(\cdot \ ;y_2)} \|_{\infty}\,.
		\end{equation*}
	\end{lemma}

	\par\smallskip
	
	\begin{lemma}[{Stone\cite{Stone90}[p.729]}]
		Let $n \geq 1$. Given $\epsilon>0$ and $\delta>0$, there exists an integer $N=N(n)>0$ and sets $E_j \subset {\cal{F}}$, $j=1,\dots,N$ satisfying
		\[
		\sup_{f_1,f_2 \in E_j} \|\log(f_1)-\log(f_2)\|_{\infty} \leq \delta n^{2 \epsilon - 1}J
		\]
		such that $E_{\eps,n} \; \subset \; \bigcup_{i=1}^{N}E_i$.
	\end{lemma}

	Combining the above lemmas it leads to the following theorem, which is a result outlined in lemmas 5 and 8 found in Stone\cite{Stone90}.
	
	\begin{theorem} \label{Aboundthm}
		Given $D>0$ and $\epsilon>0$, let  $b_{n}=n^{\epsilon}\sqrt{\dfrac{J}{n}}$, $n \geq 1$, and $0<\epsilon< \frac{1}{2}$ and $\beta=\epsilon$ as in \eqref{hyp2:Lchoice}. There exists $N=N(D)$ such that for all $n>N$
		\begin{equation*}%\label{AcOmegan}
		A_{n,b_n}(y) \subset \Omega_n \quad \text{for each $y \in Y_0$}
		\end{equation*}
		and thus
		\begin{equation}\label{Acest}
		\PP(\Omega_{n}^{c})\leq \PP\big(A^c_{n,b_{n}}(y)\big)\leq 2e^{{-n^{2\epsilon}J\delta(D)}} \,.
		\end{equation}
		
	\end{theorem}
	
	\begin{remark}
		From \eqref{Acest} we can see that as number of samples goes to infinity, we have that \[\PP(\Omega_{n})\to 1 \ \text{as} \ n\to \infty.\]
	\end{remark}
	
	\bigskip
	\begin{remark}
		The bound \eqref{Acest} presented in Theorem \ref{Aboundthm} is a consequence of Hoeffdings inequality which states that for any $t>0$
		\[
		\PP \bigg( \Big| \frac{1}{n} \sum_{i=1}^n X_i  - \EXP X_1 \Big| \geq t \bigg) \leq 2 \exp\bigg(
		-\frac{2 n^2 t^2}{\sum_{i=1}^n (b_i-a_i)^2}  \bigg)
		\]
		where $X_1,\dots,X_n$ are identically distributed independent random variables with $\PP(X_1 \in [a_i,b_i])=1$. To get the bound \eqref{Acest} one needs to choose
		\[
		t = b \bigg( \int | \log(f(\theta;y)) - \log(f(\theta; \ybar)) |^2 \, d\theta \bigg)^{\frac{1}{2}}.
		\]
	\end{remark}
	
	Now that we have defined the set where $\yhat$ exists and showed that the probability of its complement vanishes as $n\to \infty$ with a specific exponential rate, we will now state certain rates of convergence that only apply on $\Omega_{n}$. The following theorem contains results of Theorem 2 and Lemma 12 of Stone\cite{Stone90}.

	\begin{theorem}\label{estonOmegan}
		There exist constants $R_3, R_4, R_5$ and $R_6$ such that for all $\omega\in \Omega_{n}$
		\begin{equation*}
		\begin{aligned}
		& \| \log{\hat{p}(\cdot,\omega)}-\log{\pbar(\cdot)}\|_{\infty}  \leq R_{3}\dfrac{J}{\sqrt{n}}\\
		& \| \hat{p}(\cdot,\omega)-\pbar(\cdot)\|_{2}  \leq R_{4}\sqrt{\dfrac{J}{n}}\\
		& \| \hat{p}(\cdot,\omega)-\pbar(\cdot)\|_{\infty}  \leq R_{5}\sqrt{\dfrac{J\log{(J)}}{n}}\\
		& \| \log{\hat{p}(\cdot,\omega)}-\log{\pbar(\cdot)}\|_{2}  \leq R_{6}\sqrt{\dfrac{J}{n}}
		\end{aligned}
		\end{equation*}
	\end{theorem}

	\section{Parallel logspline estimators}\label{section:parest} In what follows we introduce the parallel estimators that allow us to carry out our analysis and construct an estimator for the full dataset posterior. This approach enables one to analyze datasets that are partitioned and stored in different locations. The estimators are constructed using logspline density estimation, as presented in \cite{Miroshnikov-Conlon}.

	\begin{enumerate}
		\renewcommand{\labelenumi}{\textbf{(\theenumi)}}
		\renewcommand{\theenumi}{H\arabic{enumi}}
		\setcounter{enumi}{-1}
		\item\label{hyp0:model} Motivated by the approaches introduced in \cite{Neiswanger,Wang,Miroshnikov-Savelev}, we assume that the full dataset posterior $p(\theta|\textbf{x})$, where $\theta \in \R$ is the model parameter and $\textbf{x} = \{\textbf{x}_1, \textbf{x}_2,\dots, \textbf{x}_M\}$ is the full data set partitioned into $M$ disjoint independent subsets, is reconstructed by estimating posterior densities $p(\theta|\textbf{x}_{m})$ as follows 
	\begin{equation*}%\tag{H1}
	p(\theta|\textbf{x})\propto \ds\prod_{m=1}^{M}p_{m}(\theta)=: p^*(\theta)\,, \quad \text{where} \quad p_{m}(\theta):=p(\theta|\textbf{x}_{m})=p(\theta)^{1/M}p(\textbf{x}_m | \theta).
	\end{equation*}
	Here we assume that $p_m(\theta), \ m\in \{1,\dots,M\}$ have compact support on the interval $[a,b]$. Furthermore, we assume that $\|p^*\|_2^2 < \infty$.
	\end{enumerate}

	\begin{definition}\label{def::parestform}
	\begin{enumerate}
		\item[]
		\item 	Let $\theta_1^m, \theta_2^m, \dots,\theta^m_{n_m} \sim p_m(\theta)$ be i.i.d.~random variables. Let $K_m$ be the number of knots and $k_m$ the order of the B-splines. Define the sample subspace	
		\begin{equation}\label{omeganm}
		\Omega_{n_m}^{m} := \bigg\{ \omega \in \Omega:  \yhat=\yhat(\theta_1^{m},\dots,\theta_{n_m}^{m}) \in \R^{J_m} \;\; \text{exists} \bigg\}.
		\end{equation}
		where $J_m:=K_m-k_m+1$ and $\hat{y}$ is defined  in \eqref{argmaxlhood}.
		\item Define the estimator of $p$ to be of the form
		\begin{equation*}%\label{estmodel}\tag{H2-a}
		\hat{p}(\theta)\propto
		\ds \hat{p}^*(\theta) \quad \text{where} \quad \hat{p}^*(\theta):=\prod_{m=1}^{M}\hat{p}_m(\theta)
		\end{equation*}
		and  $\hat{p}_m(\theta)$ is the logspline density estimator of  $p_m(\theta)$ that has the form
		\begin{equation*}%\label{subpost}
	\phat_{m}(\theta)=\exp\left( B_{m}(\theta;\yhat^{m})-c(\yhat^{m}) \right)
	\end{equation*}
	where
	\begin{equation*}
	B_{m}(\theta;\yhat^{m})=\sum_{j=0}^{J_{m}(n)-1}\yhat_{j}^{m}B_{j,k,T_{K_{m}(n)}}(\theta)
	\quad \text{and} \quad c(\yhat^{m})=\log\left( \int \exp\left( B_{m}(\theta;\yhat^{m})\, d\theta \right) \right),
	\end{equation*}
	and $\yhat^{m}=(\yhat_{1}^{m},\dots,\yhat_{J_{m}(n)-1}^{m})$ maximizes the log-likelihood, as described in equation \eqref{logl1}.
\item 	We also define the intermediate estimator $\bar{p}^*$ of $p$ by
	\[
	\pbar^*(\theta):=\prod_{m=1}^{M}\pbar_m(\theta)
	\]
	where $\pbar_m$ is the intermediate estimator of $p_m$ as defined in \eqref{pbardef}.
	\end{enumerate}
	\end{definition}

	\begin{definition}
	The mean integrated square error of the estimator $\hat{p}^*$ is defined by
	\begin{equation*}%\label{mise}
	 \MISE(p^*, \hat{p}^*; \N) =  \EXP \int (\hat{p}^*(\theta;\omega) - p^*(\theta))^2 \, d\theta\,\quad
	\end{equation*}
	where $\N = (n_1,n_2,\dots,n_M)$, with $n_m$ being the number of samples for the $m$-th posterior. Given the subset $\tilde{\Omega} \subset \Omega$, the conditional mean integrated square error is defined by
	\begin{equation*}%\label{misecond}
	 \MISE(p^*, \hat{p}^*; \N \, | \tilde{\Omega}) =  \EXP_{\tilde{\Omega}} \int (\hat{p}^*(\theta;\omega) - p^*(\theta))^2 \, d\theta
	\end{equation*}
	where $\EXP_{\tilde{\Omega}}$ denotes the conditional expectation given the event $\tilde{\Omega}$.
	\end{definition}

\begin{remark} \rm \label{MCMCIID}
In our work, the asymptotic analysis is performed under the assumption that the samples drawn from each subset posterior distribution are i.i.d. It is well-known, however, that samples produced with MCMC methods always have a degree of autocorrelation present in them. For Bayesian kernel density estimators, for example, the convergence analysis in the MCMC settings by \cite{De Valpine, West1993, Skold2003} shows that such estimators are asymptotically exact under certain conditions. While the autocorrelation might affect the convergence of logspline estimators, the analysis of this phenomenon is outside the scope of this paper. Meanwhile the numerical experiments in the later sections illustrate there is little effect of MCMC-produced samples on convergence.  

Finally, we note that there are several techniques available that can reduce the dependence between samples obtained with MCMC methods. One can run independent Markov chains for each sample, discard a number of intermediate samples between the recorded samples, or employ so-called perfect sampling \cite{Propp}, which guarantees i.i.d samples.

\end{remark}

	\section{Main results}\label{section:main_res}
	
	\subsection{Error bound for the unnormalized estimator}
	
	Suppose we are given a data set $\textbf{x}$ and it is partitioned into $M\geq 1$ disjoint subsets $\textbf{x}_{m}, \ m\in \{1,\dots,M\}$. We are interested in the subset posterior densities $p_{m}(\theta)=p(\theta |\textbf{x}_{m})$. For each such density we apply the analysis from before. Let $\phat_{m}$ and $\pbar_{m}, \ m\in \{1,\dots,M\}$ be the corresponding logspline estimators as defined in \eqref{phatdef} and \eqref{pbardef} respectively. By definition of $\phat_{m}$, that is equal to the logspline density estimate on $\Omega_{n_m}^{m}\subset \Omega$, where $\Omega_{n_m}^{m}$ is the set defined in \eqref{omegan} for $\phat_{m}$.
	
	\begin{definition}\label{interOmega}
		For $m\in\{1,\dots,M\}$, let $\Omega_{n_m}^{m}$ be the set defined in \eqref{omeganm}. We then set
		\[
		\Omegaline^{M,\N}:=\bigcap_{m=1}^{M}\Omega_{n_m}^{m} \quad \text{where} \quad \N=(n_1,\dots,n_m)
		\]
		which is the set where the maximizer for the log-likelihood exists given each data subset and thus all logspline density estimators $\phat_{m}$ exist.
	\end{definition}
	
	Our first result is the asymptotic bound for $\MISE$ of the unnormalized densities $p^{*}$ and $\phat^{*}$.
	\begin{theorem} \label{thm::bound_unnorm_est}
		Assume the hypotheses \eqref{hyp0:model}-\eqref{hyp4:pcond1} hold and $M\geq 1$. 

		\begin{itemize}

			\item [(a)] The sample subspace in Definition \ref{interOmega} satisfies
			\[
			\lim_{n\to\infty}\PP\Big(\Omegaline^{M,\N(n)}\Big)=1.
			\]
			\item [(b)]  The following bound holds for large $n$
		\begin{equation*}%\label{MISEnotH8}
		\begin{aligned}
		&\mkern-18mu	\MISE(p^{*},\phat^{*} \ | \ \underline{\Omega}^{M,\N(n)})\\
		&\leq \left[R_1 \sum_{m=1}^{M}\sqrt{\dfrac{J_m(n)}{N_m(n)}}+\left( \exp\left\{R_2 \ \bar{h}_{max}^{q} \ \sum_{m=1}^{M}\left\| \frac{d^{q}\log({p_m})}{d\theta^{q}} \right\|_{\infty} \right\}-1 \right)\|p^{*}(\theta)\|_2\right]^{2}
		\end{aligned}
		\end{equation*}
		for some constants $R_{1},R_{2}$.

		\item [(c)] Suppose \eqref{hyp6:hmaxcond} holds. Then
		\begin{equation*}%\label{MISEH8gen}
		\sqrt{\MISE(p^{*},\phat^{*})}=O(M n^{-\bar{\beta}(\alpha)})=O(M^{1-\bar{\beta}(\alpha)}\| \N(n) \|^{-\bar{\beta}(\alpha)})
		\end{equation*}
where 
\[
\bar{\beta}(\alpha)=\min\left(\frac{1-\alpha}{2},\alpha q\right) 
\]
Thus the optimal rates for the number of knots and the error rate are given by
\[
\alpha_{opt}= \frac{1}{2q+1}, \quad \beta_{opt}=\bar{\beta}(\alpha_{opt})=\frac{q}{2q+1}.
\]

		%  % $\MISE$ scales in regards to the number of samples, as follows
		% \begin{equation}\label{MISEH8}
		% \sqrt{\MISE(p^{*},\phat^{*})}=O(M n^{-\beta})=O(M^{1-\beta}\| \N(n) \|^{-\beta}).
		% \end{equation}
		
		% \item [(c)] If \eqref{hyp6:hmaxcond} holds, then $\MISE$ scales optimally in regards to the number of samples,
		% \begin{equation}\label{MISEH8}
		% \sqrt{\MISE(p^{*},\phat^{*})}=O(M n^{-\beta})=O(M^{1-\beta}\| \N(n) \|^{-\beta}).
		% \end{equation}
		\end{itemize}
	
	\end{theorem}
		
	\vspace{5pt}	
	\begin{remark}
	 We do the analysis of $\MISE$ on the sample subspace $\Omegaline^{M,\N(n)}$ because the estimators $\{\phat_m\}_{m=1}^M$ exist there. However, the probability of the set where the estimators $\phat_m$ exist tends to $1$ and, in practice, one would never encounter the set where the maximizer of the log-likelihood does not exist.
	\end{remark}

	\begin{remark}
		It's interesting to note how the number of knots, their placement and the number of samples all play a role in the above bound. If we want to be accurate, all of the parameters $J_{m}(n), N_{m}(n)$ and $\bar{h}_{max}$ must be chosen appropriately.
		%For instance, if the knots are not placed correctly, no matter how large of a number of samples we take for each subset, the error will be substantial since the second term in the exponential will not be small.
	\end{remark}
	
	\subsection{Error bound for the normalized estimator}\label{AnalysisRenormConst}
	
	We next consider the error bound for $\MISE$ when one renormalizes the product of the estimators so it can be a probability density. The renormalization can affect the error since $p^{*}$ and $\phat^{*}$ are rescaled. We define the renormalization constant and its estimator to be
	\begin{equation}\label{lambdahat}
	\lambda:=\int p^{*}(\theta)\,d\theta \quad \text{and} \quad \lamhat(\omega):=\int \phat^{*}(\theta;\omega)\,d\theta
	\end{equation}
	and their reciprocals by $c:=\lambda^{-1}$ $\hat{c}:=\lamhat^{-1}$. We now state our main result.

	\begin{theorem}\label{MISEpphat} Let $M\geq 1$ and suppose hypotheses \eqref{hyp0:model}-\eqref{hyp6:hmaxcond} hold.

\begin{enumerate}

\item [(a)]  Let $\lambda$ and $\lamhat(\omega)$ be defined as in \eqref{lambdahat}. Then for $n$ large enough we have
		\begin{equation*}%\label{lambdahatscale}
		\left|\frac{\lamhat(\omega)}{\lambda}-1\right|=O(M^{1-\bar{\beta}(\alpha)}\| \N(n) \|^{-\bar{\beta}(\alpha)}), \quad  \omega \in \Omegaline^{M,\N(n)}.
		\end{equation*}

\item [(b)] Furthermore, we have
\begin{equation*}
		\MISE\left( p,\phat \ | \ \underline{\Omega}^{M,\N(n)} \right)=O(M^{2-2\bar{\beta}(\alpha)}\|\N(n)\|^{-2\bar{\beta}(\alpha)}).
		\end{equation*}
	The optimal rate can then be obtained by setting $\bar{\beta}(\alpha)=\beta_{opt}$.
\end{enumerate}
\end{theorem}

\begin{remark}
	Part (a) of the above theorem  suggests that the renormalization constant $\hat{c}$ of the product of the estimators approximates the true renormalization constant $c$, when $\omega$ is restricted to the sample subspace $\underline{\Omega}^{M,\N(n)}$, the space where the logspline density estimators $\phat_{m}$, $m\in \{1,\dots,M\}$ are well-defined.
	Given the number of partitions $M$ of the data, part (2) showcases the asymptotic behavior of $\MISE$ optimally with respect to the number of samples per partition and the number of partitions.
\end{remark}	
	
	% \section{Numerical Error}\label{section:num_error}
	
	\subsection{Error bound for interpolated estimator}
In practice it is difficult to evaluate the renormalization constant 
	\[
	\lamhat(\omega)=\int\phat^{*}(\theta)\,d\theta= \int\prod_{m=1}^{M}\phat_{m}(\theta)\,d\theta
	\]
	defined in \eqref{lambdahat}. The difficulty is due to the process of generating MCMC samples and thus $\phat^{*}$ is not explicitly known. In order to circumvent this issue, our idea is to approximate the integral above numerically. To accomplish this, we interpolate $\phat^{*}$ using Lagrange polynomials. This procedure leads to the construction of an interpolant estimator $\ptil^{*}$ which we then integrate numerically. We then normalize $\ptil^{*}$ and use that as a density estimator for $p$. Unfortunately, to estimate the error by considering that kind of approximation given an arbitrary grid of points for Lagrange polynomials, independent of the set of knots $(t_{i})$ for B-splines gives a stringent condition on the smoothness of B-splines we incorporate. It turns out that we have to utilize B-splines of order at least $k=4$. For this reason we consider using Lagrange polynomials of order $l+1$ which satisfy $l<k-2$.
	
By Remark \ref{CurryScho}, in Section \ref{section:appendix}, we have that B-splines of order $k$, and therefore any splines that arise from these, will have $k-2$ continuous derivatives on $(a,b)$. Thus, in order to utilize Lemma \ref{estinterrlmm}, we must have that the order of the Lagrange polynomials be at most $k-2$, i.e. $l\leq k-3$. Since $l\geq 1$ this implies that the B-splines used in the construction of the logspline estimators be at least cubic. Thus, assume $k\geq 4$ and let $1\leq l\leq k-3$ be a positive integer that denotes the degree of the interpolating polynomials. Let $N\in \NN$ be the number of sub-intervals of $[a,b]$ on each of which we will interpolate the product of
	estimators by the polynomial of degree $l$. Thus each sub-interval has to be further  subdivided into $l$
	intervals. Define the partition $\mathcal{X}$ of $[a,b]$ such that
	\begin{equation}\label{meshX}
	\mathcal{X} = \{a=x_0 < x_1 < x_2 < \dots < x_{Nl}=b \} \,  \quad \text{and} \quad x_{i+1}-x_i = \frac{b-a}{Nl} = \Delta x.
	\end{equation}
	For each $i=0,\dots, N-1$, recalling the formula \eqref{lagrp}, we define the (random) Lagrange polynomial
	\begin{equation*}
	\hat{q}_i(\theta) := \sum_{\tau=0}^l \phat^{*}(x_{il+\tau}) l_{\tau,i}(\theta)
	\quad \text{with} \quad
	\hspace{0.2 in}l_{\tau,i}(\theta):=\ds\prod_{j \in \{0, \dots, l\}\backslash\{\tau\}}
	\left(\frac{\theta-x_{il+j}}{x_{il+\tau}-x_{il+j}}\right)\,,
	\end{equation*}
	which is a polynomial that interpolates the estimator $\phat^{*}(\theta)$ on the interval
	$[x_{il},x_{(i+1)l}]$. We next define an interpolant estimator
	$\ptil^{*}$ to be a {\it random} composite
	polynomial given by
	\begin{equation}\label{compintp}
	\ptil^{*}(\theta):= \left\{
	\begin{aligned}
	&  0,& & \theta \in \RR \backslash [a,b] \\
	&  \hat{q}_i(\theta),& & \theta \in [x_{il},x_{(i+1)l}]
	\end{aligned}\right.
	\end{equation}
	which approximates the estimator $\phat^{*}$ on the whole interval $[a,b]$. We define the renormalization constant $\tilde{c}$ and the density estimator $\ptil$ of $\phat$ as follows:
	\begin{equation}\label{lamtil}
	\frac{1}{\tilde{c}}=\tilde{\lambda}=\int_{a}^{b}\ptil^{*}(\theta)\, d\theta \quad \text{and} \quad \ptil:=\tilde{c}\ptil^{*}.
	\end{equation} 

	\noindent We now state the main result.

	\begin{theorem}\label{thm::bound_interp_est}

	Assume that hypotheses \eqref{hyp0:model}-\eqref{hyp6:hmaxcond} hold, that $1\leq l\leq k-3$ and the partition $\mathcal{X}$ is as defined in \eqref{meshX}.
	\begin{itemize}
		\item[(a)] The following bound holds for the unnormalized estimator
		\begin{equation}\label{MISEpptilest}
		\MISE (\phat^{*}\,,\ptil^{*} \ | \ \underline{\Omega}^{M,\N(n)}) \leq C \Bigg( \frac{(\Delta x)^{l+1}}{4(l+1)}\|\N(n)\|^{(l+1) \alpha} M^{l+1}  \Bigg)^2
		\end{equation}
		where $C=C(l+1,k,p_{1},\dots,p_{M},(a,b))$.
		\item[(b)] The error bound for the distance between the two renormalization constants is bounded by
		\begin{equation*}
		|\lamhat-\tilde{\lambda}|\leq C\Bigg( \frac{(\Delta x)^{l+1}}{4(l+1)}\|\N(n)\|^{(l+1) \alpha} M^{l+1}  \Bigg)
		\end{equation*}
		where the constant $C=C(l+1,k,p_{1},\dots,p_{M},(a,b))$.
		\item[(c)] The following bound holds for the normalized estimator
		\begin{equation*}
		\MISE\left( \phat,\ptil \ | \ \underline{\Omega}^{M,\N(n)} \right)=O\left[ \Big( \|\N(n)\|^{\alpha} (\Delta x) M  \Big)^{2(l+1)}\right].
		\end{equation*}
	\end{itemize}
	\end{theorem}

	\vspace{6 pt}
	\begin{theorem}\label{MISEestthm}
		Assume that hypotheses \eqref{hyp0:model}-\eqref{hyp6:hmaxcond} hold. Let $\ptil$ be the polynomial that interpolates $\phat$ as defined in \eqref{compintp}, given the partition $\mathcal{X}$.
		We then have the estimate
		\begin{equation}\label{MISEest}
		\MISE (p\,,\ptil \ | \ \underline{\Omega}^{M,\N(n)}) \, \leq \, C\left[ M^{2-2\bar{\beta}(\alpha)}\|\N(n)\|^{-2\bar{\beta}(\alpha)}+\Bigg((\Delta x)\|\N(n)\|^{\alpha} M  \Bigg)^{2(l+1)} \right]
		\end{equation}
		where the constant $C$ depends on the order $k$ of the B-splines, the degree $l$ of the interpolating polynomial, the densities $p_{1},\dots,p_{M}$ and the length of the interval $(a,b)$. Furthermore, assuming that $\Delta x$ is a function of the vector of samples $\N(n)$, then $\MISE$ scales optimally with respect to $\N(n)$ such that
		\begin{equation}\label{MISEestthm_b}
		\MISE (p\,,\ptil\ | \ \underline{\Omega}^{M,\N(n)}) \quad = \,  O(M^{2-2\beta_{opt}}\|\N(n)\|^{-2\beta_{opt}}) \quad \text{when} \quad \Delta x = O\bigg( \|\N(n)\|^{-\beta_{opt}\left(\frac{1}{l+1}+\frac{1}{q}\right)}\bigg) \,.
		\end{equation}
	\end{theorem}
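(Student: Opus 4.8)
The plan is to obtain \eqref{MISEest} by a triangle-inequality decomposition that reduces the desired bound to the two error estimates already established: Theorem \ref{MISEpphat} for the distance between $p$ and $\phat$, and Theorem \ref{MISEphatptil} for the distance between $\phat$ and $\ptil$. Working with the conditional norm $\|f\|^2 = \EE_n\int f^2\,d\theta$, where $\EE_n(\cdot)=\EE(\cdot\mid\underline{\Omega}^{M,\N(n)})$, I would write $p-\ptil=(p-\phat)+(\phat-\ptil)$ and apply the elementary inequality $(u+v)^2\leq 2u^2+2v^2$ pointwise before integrating, which yields
\[
\MISE(p,\ptil \mid \underline{\Omega}^{M,\N(n)}) \leq 2\,\MISE(p,\phat \mid \underline{\Omega}^{M,\N(n)}) + 2\,\MISE(\phat,\ptil \mid \underline{\Omega}^{M,\N(n)}).
\]
This step is clean because it discards any cross term, and because both functionals on the right are conditioned on exactly the same sample subspace $\underline{\Omega}^{M,\N(n)}$, so the conditional expectations $\EE_n$ are consistent.

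Next I would substitute the two established rates. Theorem \ref{MISEpphat} gives $\MISE(p,\phat \mid \underline{\Omega}^{M,\N(n)})=O(M^{2-2\beta}\|\N(n)\|^{-2\beta})$ and Theorem \ref{MISEphatptil} gives $\MISE(\phat,\ptil \mid \underline{\Omega}^{M,\N(n)})=O\big[((\Delta x)\|\N(n)\|^{\beta/(j+1)}M)^{2(l+1)}\big]$. Absorbing the factor $2$ together with the two hidden constants into a single constant $\mathcal{C}$ (depending on $k$, $l$, the densities $p_1,\dots,p_M$ and the interval length, exactly as claimed) produces the two-term bound \eqref{MISEest} directly; no new analysis is required, since the substance of the estimate resides in the two cited theorems.

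Finally, for the optimal scaling \eqref{MISEestthm_b} I would choose $\Delta x$ so that the interpolation term matches the estimation term. Substituting $\Delta x=O\big(\|\N(n)\|^{-\beta(1/(l+1)+1/(j+1))}\big)$ into the interpolation term, the exponent of $\|\N(n)\|$ becomes
\[
-2\beta(l+1)\Big(\tfrac{1}{l+1}+\tfrac{1}{j+1}\Big)+\tfrac{2(l+1)\beta}{j+1}=-2\beta,
\]
so that this term collapses to $O(\|\N(n)\|^{-2\beta})$, up to the constant factor $M^{2(l+1)}$. The two contributions then scale identically, and absorbing all remaining powers of $M$ into a constant $C$ gives the claimed $\MISE(p,\ptil \mid \underline{\Omega}^{M,\N(n)})\leq C\|\N(n)\|^{-2\beta}$.

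I do not anticipate a genuine obstacle: the present statement is essentially the combination of Theorems \ref{MISEpphat} and \ref{MISEphatptil}, and the decomposition above is the only mechanism needed. The sole points demanding care are the exponent bookkeeping in the last step — verifying that the chosen $\Delta x$ forces the interpolation rate to coincide with the estimation rate rather than dominating it — and the observation that the leftover powers of $M$ in the optimized estimate are bounded constants, which is why \eqref{MISEestthm_b} can be stated free of explicit $M$-dependence.
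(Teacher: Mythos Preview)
Your proposal is correct and matches the paper's approach: the paper likewise splits $\MISE(p,\ptil)$ into $I_1+I_2$ via the same triangle-inequality decomposition and invokes Theorems \ref{MISEpphat} and \ref{MISEphatptil}. The only cosmetic difference is in the optimization step, where the paper parametrizes $\Delta x\asymp\|\N(n)\|^{-\alpha}$ and identifies the minimal $\alpha$ maximizing the rate function $R(\alpha)$, whereas you verify the exponent arithmetic directly; the content is the same.
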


\section{Proof of error estimate theorems}\label{section::proofs}

\subsection{Proof of the error bound for the unnormalized estimator}

\noindent{\bf Proof of Theorem \ref{thm::bound_unnorm_est}}
	\begin{proof} 
		By Theorem \ref{Aboundthm} we have that
		\[
		\begin{aligned}
		&\PP\left( \Omega \setminus \Omegaline^{M,\N(n)} \right)=\PP\left( \bigcup_{m=1}^{M}(\Omega_{N_m(n)}^{m})^{c} \right) \\
		&\leq \sum_{m=1}^{M}\PP\left( (\Omega_{N_m(n)}^{m})^{c} \right)\leq \sum_{m=1}^{M} 2 e^{{-N_m(n)^{2\epsilon}J_m(n)\delta_m(D)}}.
		\end{aligned}
		\]
	Sending $n$ to infinity yields part (a). To prove the bound for the conditional $\MISE$ in (b), we make the following claim:
	
	\noindent{\bf Claim:} Let $q \leq k$ be the constant in \eqref{hyp4:pcond1}. We then have
	\begin{itemize}
		\item[(i)] For large $n$, there exists a positive constant $R_1$ such that
		\[
		\|\phat^{*}(\theta;\omega)-\pbar^{*}(\theta)\|_2\le R_1 \sum_{m=1}^{M}\sqrt{\dfrac{J_m(n)}{N_m(n)}}.
		\]
		\item[(ii)] There exists a positive constant $R_{2}=R_2(M,k,q,\mathcal{F}_p,\gamma(T_{K_{1}(n)}),\dots,\gamma(T_{K_{M}(n)}))$ such that
		\[
		\|\log{(p^{*})}-\log{(\pbar^{*})}\|_{\infty}\leq R_2 \ \bar{h}_{max}^{q} \ \sum_{m=1}^{M}\left\| \frac{d^{q}\log({p_m})}{d\theta^{q}} \right\|_{\infty} \quad \text{where} \quad \bar{h}_{max}=\max_{m}\{\bar{h}_{m}\}.
		\]
		\item[(iii)] Using the bounds from $(a)$ and $(b)$ we have
		\[
		\|\phat^{*}(\theta;\omega)-p^{*}(\theta)\|_2\leq R_1 \sum_{m=1}^{M}\sqrt{\dfrac{J_m(n)}{N_m(n)}}+\left( \exp\left\{R_2 \ \bar{h}_{max}^{q} \ \sum_{m=1}^{M}\left\| \frac{d^{q}\log({p_m})}{d\theta^{q}} \right\|_{\infty} \right\}-1 \right)\|p^{*}(\theta)\|_2.
		\]
	\end{itemize}

\noindent {\bf Proof of claim:}
\begin{itemize}
	\item[(i)] Let $M=2$. We have
	\begin{align*}
		\|\phat^{*}(\theta;\omega)-\pbar^{*}(\theta)\|_2&=\|\phat_1 \phat_2-\pbar_1 \pbar_2\|_2=\|\phat_1 \phat_2-(\pbar_1-\phat_1+\phat_1) \pbar_2\|_2\\
		&=\|\phat_1 (\phat_2-\pbar_2)-(\pbar_1-\phat_1)\pbar_2\|_2\\
		&\le \|\phat_1(\phat_2-\pbar_2)\|_2+\|\pbar_2(\pbar_1-\phat_1)\|_2
	\end{align*}
	Observe that
	\[
	\|\phat_1\|_{\infty}\le \|\phat_1-\pbar_1\|_{\infty}+\|p_1-\pbar_1\|_{\infty}+\|p_1\|_{\infty}
	\quad \text{and} \quad \|\pbar_2\|_{\infty}\le \|p_2-\pbar_2\|_{\infty}+\|p_2\|_{\infty}\]
	and then by applying Lemma \ref{pbarsupnormbound} and Theorem \ref{estonOmegan}, for large enough $n$ there exist positive constants $R_4^{(1)}, R_4^{(2)}$ that respectively depend on $\|p_2\|_{\infty}$ and $\|p_1\|_{\infty}$ such that
	\[
	\|\phat^{*}(\theta;\omega)-\pbar^{*}(\theta)\|_2 \le \sum_{m=1}^{2}R_4^{(m)}\sqrt{\dfrac{J_m(n)}{N_m(n)}}.
	\]
	By induction we obtain the result by setting $R_1=\max\{R_4^{(1)},\dots,R_4^{(M)}\}$.
	\item[(ii)] We write
	\begin{align*}
		\|\log(p^{*}(\cdot))-\log(\pbar^{*}(\cdot))\|_{\infty}&=\| \log(\prod_{m=1}^{M}p_m(\cdot))-\log(\prod_{m=1}^{M}\pbar_m(\cdot)) \|_{\infty} \\
		&\leq \sum_{m=1}^{M}\|\log(p_m(\cdot))-\log(\pbar_m(\cdot))\|_{\infty}
	\end{align*}
	and then we apply Lemma \ref{pbarsupnormbound}. For each $m\in \{1,\dots,M\}$ there will be constants $R_{m}$ and $C_{m}(k,j)$ appearing and we can take $R_{2}=\max_{m}\{M'_{m}C_{m}(k,j)\}$.
	\item[(iii)] By the triangle inequality we have
	\begin{equation}\label{l2_pbar_p}
		\|\phat^{*}(\theta;\omega)-p^{*}(\theta)\|_2\le 
		\|\phat^{*}(\theta;\omega)-\pbar^{*}(\theta)\|_2 + \|\pbar^{*}(\theta)-p^{*}(\theta)\|_2.
	\end{equation}
	For the second term on the right-hand side we write
	\[
	\|\pbar^{*}(\theta)-p^{*}(\theta)\|_2 = \left\|p^{*}(\theta)\left(\frac{\pbar^{*}(\theta)}{p^{*}(\theta)}-1\right)\right\|_2 =\left\|p^{*}(\theta)\left(\exp\{\log(\pbar^{*}(\theta))-\log(p^{*}(\theta))\}-1\right)\right\|_2.
	\]
	
	If $\pbar^{*}(\theta)\geq p^{*}(\theta)$ then
	\[
	\left|\exp\{\log(\pbar^{*}(\theta))-\log(p^{*}(\theta))\}-1\right|=\exp\{\log(\pbar^{*}(\theta))-\log(p^{*}(\theta))\}-1.
	\]
	
	If $\pbar^{*}(\theta)< p^{*}(\theta)$ then
	\begin{align*}
		\left|\exp\{\log(\pbar^{*}(\theta))-\log(p^{*}(\theta))\}-1\right|&=1-\exp\{\log(\pbar^{*}(\theta))-\log(p^{*}(\theta))\}\\
		&\leq \exp\{\log(p^{*}(\theta))-\log(\pbar^{*}(\theta))\}-1
	\end{align*}
	where the inequality is justified by the fact that $1-e^{-x}\leq e^{x}-1, \ \text{for any} \ x\geq 0$. This implies
	\begin{align*}
		\|\pbar^{*}(\theta)-p^{*}(\theta)\|_2&\leq \left\|p^{*}(\theta)\left(\exp\{|\log(\pbar^{*}(\theta))-\log(p^{*}(\theta))|\}-1\right)\right\|_2\\
		&\leq
		\left(\exp\{\|\log(\pbar^{*}(\theta))-\log(p^{*}(\theta))\|_{\infty}\}-1\right)\left\|p^{*}(\theta)\right\|_2
	\end{align*}
	The result is obtained by applying the bounds from parts (i) and (ii) to \eqref{l2_pbar_p}. This concludes the proof of the claim.
\end{itemize}
	
	Thus, for the conditional $\MISE$ we have
	\begin{align*}
	&\MISE(p^{*},\phat^{*}\ | \ \underline{\Omega}^{M,\N(n)})=\EE_{\underline{\Omega}^{M,\N(n)}}\int (\phat^{*}(\theta;\omega)-p^{*}(\theta))^{2}\,d\theta\\
	&\leq \EE_{\underline{\Omega}^{M,\N(n)}}\left[R_1 \sum_{m=1}^{M}\sqrt{\dfrac{J_m(n)}{N_m(n)}}+\left( \exp\left\{R_2 \ \bar{h}_{max}^{q} \ \sum_{m=1}^{M}\left\| \frac{d^{q}\log({p_m})}{d\theta^{q}} \right\|_{\infty} \right\}-1 \right)\|p^{*}(\theta)\|_2\right]^{2}
	\end{align*}
	which proves part (b). Finally, if \eqref{hyp6:hmaxcond} holds, then part (c) follows directly.

	\end{proof}

\subsection{Proof of the error bound for the normalized estimator}

\noindent{\bf Proof of Theorem \ref{MISEpphat}}
\begin{proof}
	By definition of $\lambda$ and $\lamhat(\omega)$ and following similar steps as in the proof of Theorem \ref{thm::bound_unnorm_est}, we have
		\begin{align*}
		|\lambda-\lamhat(\omega)|&=\left| \int p^{*}(\theta)\, d\theta-\int \phat^{*}(\theta;\omega)\, d\theta \right|\\
		&\le \int \left(\exp\{|\log(\phat^{*}(\theta))-\log(p^{*}(\theta))|\}-1\right)|p^{*}(\theta)|\,d\theta\\
		&\le \|p^{*}(\theta)\|_{2}\|\exp\{|\log(\phat^{*}(\theta))-\log(p^{*}(\theta))|\}-1\|_{2}
		\end{align*}
	By Lemma \ref{pbarsupnormbound} and Theorem \ref{estonOmegan} we have for $n$ large enough that
	\[
	\|\log{(\phat^*(\cdot,\omega))}-\log{(p^*(\cdot))}\|_{\infty}\le \|\log{(\phat^*(\cdot,\omega))}-\log{(\pbar^*(\cdot))}\|_{\infty}+\|\log{(\pbar^*(\cdot))}-\log{(p^*(\cdot))}\|_{\infty}\le 1
	\]
	which implies that
	\[
	\exp\{|\log(\phat^{*}(\theta))-\log(p^{*}(\theta))|\}-1 = O(|\log(\phat^{*}(\theta))-\log(p^{*}(\theta))|).
	\]
	Thus, there exists some constant $C$ such that
	\begin{align*}
		|\lambda-\lamhat(\omega)| &\leq C\|p^{*}(\theta)\|_{2}\|\log(\phat^{*}(\theta))-\log(p^{*}(\theta))\|_{2}\\
		&\le C\|p^{*}(\theta)\|_{2}(\|\log(\phat^{*}(\theta))-\log(\pbar^{*}(\theta))\|_{2}+\|\log(\pbar^{*}(\theta))-\log(p^{*}(\theta))\|_{2})\\
		&\le C\|p^{*}(\theta)\|_{2}\left( R_1 \sum_{m=1}^{M}\sqrt{\dfrac{J_m(n)}{N_m(n)}}+ \exp\left\{R_2 \ \bar{h}_{max}^{q} \ \sum_{m=1}^{M}\left\| \frac{d^{q}\log({p_m})}{d\theta^{q}} \right\|_{\infty} \right\}-1 \right)
	\end{align*}
		for some constants $R_1,R_2$. Dividing by $\lambda$ and considering \eqref{hyp6:hmaxcond} the result in part (a) follows.

	To analyze $\MISE(p,\phat)$ we introduce an intermediate functional
		\begin{equation*}
		\overline{\MISE}\left(p,\phat \ | \ \underline{\Omega}^{M,\N(n)}\right):=\EE_{\underline{\Omega}^{M,\N(n)}}\left[ \left(\frac{\lamhat(\omega)}{\lambda}\right)^{2}\int (\phat(\theta;\omega)-p(\theta))^{2}\,d\theta \right].
		\end{equation*}
We now show that functional is asymptotically equivalent to $\MISE$, thus providing us with the means to view how $\MISE$ scales without having to directly analyze it. Notice that 
\begin{equation*}
		\begin{aligned}
		\overline{\MISE}\left(p,\phat \ | \ \underline{\Omega}^{M,\N(n)}\right)&=\EE_{\underline{\Omega}^{M,\N(n)}}\left[ \left(\frac{\lamhat}{\lambda}-1+1\right)^{2}\int (\phat(\theta;\omega)-p(\theta))^{2}\,d\theta \right]\\
		&=\EE_{\underline{\Omega}^{M,\N(n)}}\left[ \left[\left(\frac{\lamhat}{\lambda}-1\right)^{2}+2\left(\frac{\lamhat}{\lambda}-1\right)+1\right]\int (\phat(\theta;\omega)-p(\theta))^{2}\,d\theta \right]
		\end{aligned}		
\end{equation*}
		and hence part (a) implies

\begin{equation}\label{intermMISE1}
\begin{aligned}
\overline{\MISE}\left(p,\phat \ | \ \underline{\Omega}^{M,\N(n)}\right)=\big(1+O(M^{1-\bar{\beta}(\alpha)}\|\N(n)\|^{-\bar{\beta}(\alpha)})\big)\MISE\left(p,\phat \ | \ \underline{\Omega}^{M,\N(n)}\right).
\end{aligned}
\end{equation}
Let $\EE_n(\cdot):=\EE(\cdot|\underline{\Omega}^{M,\N(n)})$. Notice that 
	\begin{align*}
	\overline{\MISE}\left( p,\phat \ | \ \underline{\Omega}^{M,\N(n)} \right)
	&=\|p\|_{2}^{2}\ \EE_n\left[\left(\frac{\lamhat}{\lambda}-1\right)^{2}\right]+\lambda^{-2}\ \MISE_n(p^{*},\phat^{*})\\
	&\quad -2\lambda^{-1}\ \EE_n\int\left(\frac{\lamhat}{\lambda}-1\right)(\phat^{*}-p^{*})p\,d\theta\\
	&=J_{1}+J_{2}+J_{3}
	\end{align*}
	We next determine how each $J_{i}$, $i\in \{1,2,3\}$ scales. For $J_1$ part (a) implies
	\[
	J_1=O(M^{2-2\bar{\beta}(\alpha)}\|\N(n)\|^{-2\bar{\beta}(\alpha)}),
	\]
	while for $J_2$ Theorem \ref{thm::bound_unnorm_est}(c) implies
	\[
	J_2=O(M^{2-2\bar{\beta}(\alpha)}\|\N(n)\|^{-2\bar{\beta}(\alpha)})
	\]
	and for $J_3$ we have
	\begin{align*}
	|J_3|^{2}&\leq 4\lambda^{-2}\ \left(\EE_n\int\left|\frac{\lamhat}{\lambda}-1\right||\phat^{*}-p^{*}|p\,d\theta\right)^{2}\\
	&\leq 4\lambda^{-2}\ \EE_n\left[\left(\frac{\lamhat}{\lambda}-1\right)^{2}\|p\|_2^2\right]\cdot \MISE_n(p^{*},\phat^{*}).
	\end{align*}	
	Theorem \ref{thm::bound_unnorm_est}(c) and part (a) again imply
	\[
	|J_3|=O(M^{2-2\bar{\beta}(\alpha)}\|\N(n)\|^{-2\bar{\beta}(\alpha)}).
	\]
	Combining the above estimates and using \eqref{intermMISE1} proves part (b).
\end{proof}

\subsection{Proof of the error bound for the interpolated estimator}
	
	\begin{proposition}\label{essboundphatder}
		Suppose hypotheses \eqref{hyp0:model}-\eqref{hyp6:hmaxcond} hold. Then for $0\leq\pi<k-1$
		\[
		\left| \phat_{m}^{(\pi)}(\theta;\omega) \right|\leq C\|\N(n)\|^{\pi \alpha} \quad \theta \in (a,b), \quad \omega \in \underline{\Omega}^{M,\N(n)},
		\]
		where the constant $C=C(\pi,k,p_{m})$.
	\end{proposition}
	
	\begin{proof} Observe that the estimator $\phat_{m}$ can be expressed as
		\[
		\phat_{m}(\theta)=\exp{\big[ \sum_{j=0}^{J_{m}(n)-1}\yhat_{j}^{m}B_{j,k}(\theta)-c(\yhat^{m}) \big]}=\exp{\big[\sum_{j=0}^{J_{m}(n)-1}(\yhat_{j}^{m}-c(\yhat^{m}))B_{j,k}(\theta) \big]}
		\]
		
		\noindent Then, applying Faa di Bruno's formula, we obtain for $\theta\in [t_{i},t_{i+1}]$
		\[
		|\phat_{m}^{(\pi)}(\theta)|\leq \phat_{m}(\theta)\sum_{k_1+2k_2+\dots+\pi k_{\pi}=\pi}\frac{\pi !}{k_1!k_2!\dots k_{\pi}!}\prod_{i=1}^{\pi}\left(\frac{\left|\frac{d^{i}}{d\theta^{i}}\sum_{j=0}^{J_{m}(n)-1}(\yhat_{j}^{m}-c(\yhat^{m}))B_{j,k}(\theta)\right|}{i!}\right)^{k_i},
		\]
		where $k_1,\dots,k_{\pi}$ are nonnegative integers and if $k_{i}>0$ with $i\geq k$ then that term in the sum above will be zero since almost everywhere $B_{j,k}^{(i)}(\theta)=0$. By De Boor's formula \cite[p.132]{de Boor}, we can estimate the derivative of a spline as follows
		\[
		\left|\frac{d^{i}}{d\theta^{i}}\sum_{j=0}^{J_{m}(n)-1}(\yhat_{j}^{m}-c(\yhat^{m}))B_{j,k}(\theta)\right|=\left|\frac{d^{i}}{d\theta^{i}}\log{\phat_{m}(\theta)}\right|\leq C\frac{\|\log{\phat}\|_{\infty}}{\underline{h}_{m}^{i}}.
		\]
		where the constant $C$ depends only on the order $k$ of the B-splines. Therefore, we can bound $|\phat_{m}^{(\pi)}(\theta)|$ as follows
		\begin{align*}
		| \phat_{m}^{(\pi)}(\theta)|&\leq \phat_{m}(\theta)\sum_{k_1+2k_2+\dots+\pi k_{\pi}=\pi}\frac{\pi !}{k_1!k_2!\dots k_{\pi}!}\prod_{i=1}^{\pi}\left(C\frac{\|\log{\phat}_{m}\|_{\infty}}{i!\, \underline{h}_{m}^{i}}\right)^{k_i}\\
		&\leq \phat_{m}(\theta) \bigg( \frac{1+C^{\pi}\|\log{\phat}_{m}\|_{\infty}^{\pi}}{\underline{h}_{m}^{\pi}}\bigg)\sum_{k_1+2k_2+\dots+\pi k_{\pi}=\pi}\frac{\pi !}{k_1!k_2!\dots k_{\pi}!}.
		\end{align*}	
		The above leads to the following bound:
		\begin{align*}
		\left|\phat_{m}^{(\pi)}(\theta)\right|&\leq \phat_{m}(\theta)\frac{1+C^{\pi}\|\log{\phat}_{m}\|_{\infty}^{\pi}}{\underline{h}_{m}^{\pi}}\sum_{\zeta=1}^{\pi}\frac{\pi !}{\zeta !}(\pi-\zeta+1)^{\zeta}\\
		&\leq C(k,\pi)\,\phat_{m}(\theta)\frac{1+\|\log{\phat}_{m}\|_{\infty}^{\pi}}{\underline{h}_{m}^{\pi}}
		\end{align*}
		where $C(k,\pi)$ is a constant that depends on the order $k$ and the $\pi$.
		Next, recalling the hypotheses \eqref{hyp1:unifconvNh}, \eqref{hyp2:Lchoice},\eqref{hyp4:pcond1} and \eqref{hyp6:hmaxcond}, we obtain
		\[
		\phat_{m}(\theta)\leq |\phat_{m}(\theta)-p_{m}(\theta)|+p_{m}(\theta)\leq \|p_{m}\|_{\infty}(1+c\|\N(n)\|^{-\bar{\beta}(\alpha)})
		\]
		and
		\begin{align*}
		\|\log{\phat_{m}}\|_{\infty}&\leq \|\log{\phat_{m}}-\log{\pbar_{m}}\|_{\infty}+\|\log{\pbar_{m}}-\log{p_{m}}\|_{\infty}+\|\log{p_{m}}\|_{\infty}\\
		&\leq c\|\N(n)\|^{-\tilde{\beta}(\alpha)}+\|\log{p_{m}}\|_{\infty}
		\end{align*}
		where we also used Lemma \ref{pbarsupnormbound} and Theorem \ref{estonOmegan} and $\tilde{\beta}(\alpha)=\min(1/2-\alpha,\alpha q)$. Therefore,
		\begin{align*}
		\left|\phat_{m}^{(\pi)}(\theta)\right|&\leq C(k,\pi)\,\|p_{m}\|_{\infty}(1+\|\N(n)\|^{-\tilde{\beta}(\alpha)})\frac{1+\|\N(n)\|^{-\pi\beta}+\|\log{p_{m}}\|_{\infty}^{\pi}}{\underline{h}_{m}^{\pi}}\\
		&\leq C(\pi,k,p_{m})\frac{1}{\underline{h}_{m}^{\pi}}\\
		&= C(\pi,k,p_{m})\underline{h}_{m}^{-\pi} \ \sim \ C(\pi,k,p_{m})\|\N(n)\|^{\pi \alpha}
		\end{align*}
		The final result follows immediately and since the index $i$ was chosen arbitrarily and that all interior knots are simple, this concludes the proof.
	\end{proof}
	
	\begin{remark}
		Remark \ref{CurryScho}, in Section \ref{section:appendix}, allowed us to extend the bound for all $\theta \in (a,b)$ in the proof above. In reality, we can also extend the bound to the closed interval $[a,b]$. Since $a=t_{0}$ and $b=t_{K_{m}(n)}$ are knots with multiplicity $k$, any B-spline that isn't continuous at those knots will just be a polynomial that has been cut off, which means there is no blow-up. Thus, we can extend the bound by considering right-hand and left-hand limits of derivatives at $a$ and $b$, respectively. From this point on we consider the bound in Proposition \ref{essboundphatder} holds for all $\theta \in [a,b]$.
	\end{remark}

	\begin{lemma} \label{subpostboundlmm}
		Let hypotheses \eqref{hyp0:model}-\eqref{hyp6:hmaxcond} hold. Suppose that for each $m=1,\dots,M$ and every $\omega \in \underline{\Omega}^{M,\N(n)},$ the map $\theta \to \hat{p}_m(\theta; \omega) \in C^{(\pi)}([a,b])$ for some $0<\pi<k-1$.	Then
		\begin{equation*}%\label{subpostboundest}
		\Big|\frac{d^{\pi}}{d \theta^{\pi}}\hat{p}^*(\theta;\omega)\Big| =
		\big|(\hat{p}_{1}...\hat{p}_{M})^{(\pi)}(\theta)\big| \leq C\|\N(n)\|^{\pi \alpha}M^{\pi}  \quad \theta \in [a,b], \quad \omega \in \underline{\Omega}^{M,\N(n)},
		\end{equation*}
		where $C=C(\pi,k,p_1,\dots,p_M)$.
	\end{lemma}
	\begin{proof}
		Let $\theta\in [a,b]$. By Proposition \eqref{essboundphatder} we have
		\[
		|\hat{p}_m^{(\pi)}(\theta)| \, \leq \, C(\pi,k,p_{m})\|\N(n)\|^{\pi \alpha}.
		\]
		Then, using the general Leibnitz rule and employing the above inequality we obtain
		\begin{align*}
		\Big|\frac{d^{\pi}}{d \theta^{\pi}}\hat{p}^*(\theta)\Big| &=
		\big|(\hat{p}_{1}...\hat{p}_{M})^{(\pi)}(\theta)\big| = \\
		&  = \bigg|\sum_{i_{1}+\dots+i_{M}=\pi}\dfrac{\pi!}{i_{1}! \dots i_{M}!}\hat{p}_{1}^{(i_{1})}...\hat{p}_{M}^{(i_{M})}\bigg|
		\\
		& \leq \sum_{i_{1}+...+i_{M}=\pi}\dfrac{\pi!}{i_{1}!...i_{M}!}C(i_{1},k,p_{1})\|\N(n)\|^{i_{1} \alpha}\,...\,C(i_{M},k,p_{M})\|\N(n)\|^{i_{M} \alpha}\\
		&=\|\N(n)\|^{\pi \alpha}\sum_{i_{1}+...+i_{M}=\pi}\dfrac{\pi!}{i_{1}!...i_{M}!}C(i_{1},k,p_{1})\,...\,C(i_{M},k,p_{M})
		\end{align*}
		From the proof of Proposition \ref{essboundphatder}, notice that $C(i,k,p_{m})\leq C(j,k,p_{m})$ for positive integers $i\leq j$. Therefore, we have
		\[
		|\hat{p}_m^{(\pi)}(\theta)|\leq C(\pi,k,p_{1},\dots,p_{M})\|\N(n)\|^{\pi \alpha}\sum_{i_{1}+...+i_{M}=\pi}\dfrac{\pi!}{i_{1}!...i_{M}!}
		\]
		where $C(\pi,k,p_{1},\dots,p_{M})=C(\pi,k,p_{1})\dots C(\pi,k,p_{M})$ and the result follows from the multinomial theorem. This concludes the proof.	
	\end{proof}

\noindent{\bf Proof of Theorem \ref{thm::bound_interp_est}}	
\begin{proof}
		Let  $i \in \{0,\dots,N-1\}$. By Lemma \ref{estinterrlmm}, Lemma \ref{subpostboundlmm}, and
		\eqref{compintp}   for any $\theta \in [x_{il},x_{(i+1)l}]$ we have
		\begin{equation}\label{locerr}
		\begin{aligned}
		\big|\phat^{*}(\theta) - \ptil^{*}(\theta)\big|  &= \big|\phat^{*}(\theta) - \hat{q}_i(\theta)\big| \\[2pt]
		& \leq \bigg( \sup_{\theta \in [x_{il},x_{(i+1)l}]} \Big|\frac{d}{d \theta}^{(l+1)}\hat{p}^*(\theta)\Big|\bigg) \dfrac{(\Delta x)^{l+1}}{4(l+1)} \\
		& \leq C_1 \frac{(\Delta x)^{l+1}}{4(l+1)} \|\N(n)\|^{(l+1)\alpha}M^{l+1}.
		\end{aligned}
		\end{equation}
		where $C_1 = C(l+1,k,p_{1},\dots,p_{M})$. Thus 
		\[
		\begin{aligned}
		\mathbb{E}\int_{a}^{b}\big(\phat^{*}(\theta)-\ptil^{*}(\theta)\big)^{2}d\theta & = \sum_{i=0}^{N-1} \mathbb{E}\int_{x_{il}}^{x_{(i+1)l}} \big( \phat^{*}(\theta)-\hat{q}_i(\theta)\big)^2 d\theta \\
		&   \leq C_2 \Bigg( \frac{(\Delta x)^{l+1}}{4(l+1)}\|\N(n)\|^{(l+1) \alpha} M^{l+1}  \Bigg)^2
		\end{aligned}
		\]
		where $C_2=C_1^{2}(b-a)$. This proves part (a).

		We next observe that
		\[
		|\lamhat-\tilde{\lambda}|\leq \int_{a}^{b}|\phat^{*}(\theta)-\ptil^{*}(\theta)|\, d\theta
		\]
		and hence \eqref{locerr} and  Lemma \ref{estinterrlmm} imply part (b).

	To analyze $\MISE(\phat,\tilde{p})$ we introduce an intermediate functional
	\begin{equation*}
		\underline{\MISE}\left(\phat,\ptil \ | \ \underline{\Omega}^{M,\N(n)}\right)=\EE_{\underline{\Omega}^{M,\N(n)}}\left[ \left(\frac{\tilde{\lambda}}{\lamhat(\omega)}\right)^{2}\int (\phat(\theta;\omega)-\ptil(\theta))^{2}\,d\theta \right].
		\end{equation*}
We now show that functional is asymptotically equivalent to $\MISE$, thus providing us with the means to view how $\MISE$ scales without having to directly analyze it. Notice that 
\begin{align*}
&\underline{\MISE}\left(\phat,\ptil \ | \ \underline{\Omega}^{M,\N(n)}\right)=\EE_{\underline{\Omega}^{M,\N(n)}}\left[ \left(\frac{\tilde{\lambda}}{\lamhat}-1+1\right)^{2}\int (\phat(\theta;\omega)-\ptil(\theta))^{2}\,d\theta \right]\\
&=\EE_{\underline{\Omega}^{M,\N(n)}}\left[\left( \lambda^{-2}\left(\frac{\lambda}{\lamhat}\right)^{2}\left(\tilde{\lambda}-\lamhat\right)^{2}+2\lambda^{-1}\frac{\lambda}{\lamhat}\left(\tilde{\lambda}-\lamhat\right)+1\right)\int (\phat(\theta;\omega)-\ptil(\theta))^{2}\,d\theta \right]\,.
\end{align*}	
Then, Theorem \ref{MISEpphat}(a) implies
\begin{align*}
\frac{\lambda}{\lamhat}&\leq \frac{1}{1-C\,M^{1-\bar{\beta}(\alpha)}\|\N(n)\|^{-\bar{\beta}(\alpha)}}\,,
\end{align*}
and hence, by part (b), for large enough $n$ for which $1-C\,M^{1-\bar{\beta}(\alpha)}\|\N(n)\|^{-\bar{\beta}(\alpha)}>0$, we obtain
\begin{equation*}%\label{intermMISE2}
\underline{\MISE}\left(\phat,\ptil \ | \ \underline{\Omega}^{M,\N(n)}\right)=(1+O(M^{l+1}(\Delta x)^{l+1}))\MISE\left(\phat,\ptil \ | \ \underline{\Omega}^{M,\N(n)}\right).
\end{equation*}

Let $\EE_n(\cdot):=\EE(\cdot|\underline{\Omega}^{M,\N(n)})$. Notice that
\begin{align*}
&\underline{\MISE}\left( \phat,\ptil \ | \ \underline{\Omega}^{M,\N(n)} \right)
=\EE_n\int \left(\frac{\tilde{\lambda}}{\lamhat}\phat-\frac{1}{\lamhat}\ptil^{*}-\phat+\phat\right)^{2}\,d\theta\\
&\leq \frac{\lambda^{-1}}{1-C\,M^{1-\bar{\beta}(\alpha)}\|\N(n)\|^{-\bar{\beta}(\alpha)}}\EE_n\int \left((\tilde{\lambda}-\lamhat)(\phat-p)+(\tilde{\lambda}-\lamhat)p+(\phat^{*}-\ptil^{*})\right)^{2}\,d\theta\\
&\leq \frac{\lambda^{-1}}{1-C\,M^{1-\bar{\beta}(\alpha)}\|\N(n)\|^{-\bar{\beta}(\alpha)}}(J_1+J_2+J_3+J_4+J_5+J_6)
\end{align*}
where
\begin{align*}
&J_1=\EE_n\int (\tilde{\lambda}-\lamhat)^{2}(\phat-p)^{2}\,d\theta,&
&J_2=\EE_n\int (\tilde{\lambda}-\lamhat)^{2}p^{2}\,d\theta,&\\
&J_3=\EE_n\int (\phat^{*}-\ptil^{*})^{2}\,d\theta,&
&J_4=2\,\EE_n\int (\tilde{\lambda}-\lamhat)^{2}(\phat-p)p\,d\theta,&\\
&J_5=2\,\EE_n\int (\tilde{\lambda}-\lamhat)(\phat-p)(\phat^{*}-\ptil^{*})\,d\theta,&
&J_6=2\,\EE_n\int (\tilde{\lambda}-\lamhat)(\phat^{*}-\ptil^{*})p\,d\theta.&
\end{align*}

Then hypotheses \eqref{hyp1:unifconvNh}-\eqref{hyp6:hmaxcond}, Lemma \ref{MISEpphat}, part (a) and part (b) imply
	\begin{align*}
	\vert J_1\vert &\leq C_1\Big( \|\N(n)\|^{\alpha} (\Delta x) M  \Big)^{2(l+1)} M^{2-2\bar{\beta}(\alpha)}\|\N(n)\|^{-2\bar{\beta}(\alpha)}\\
	\vert J_2\vert +\vert J_3\vert +\vert J_6\vert &\leq C_2\Big( \|\N(n)\|^{\alpha} (\Delta x) M  \Big)^{2(l+1)}\\
	\vert J_4\vert +\vert J_5\vert &\leq C_3\Big( \|\N(n)\|^{\alpha} (\Delta x) M  \Big)^{2(l+1)}  M^{1-\bar{\beta}(\alpha)}\|\N(n)\|^{-\bar{\beta}(\alpha)}
	\end{align*}
	which for large $n$ implies part (c).
\end{proof}

\noindent{\bf Proof of Theorem \ref{MISEestthm}}

	\begin{proof}
		Observe that
		\begin{equation*}
		\begin{aligned}
		\MISE (p\,,\ptil \ | \ \underline{\Omega}^{M,\N(n)}) 
		&  \leq  \mathbb{E}\int_{a}^{b}\big(p(\theta)-\hat{p}(\theta)\big)^{2}d\theta+\mathbb{E}\int_{a}^{b}\big(\hat{p}(\theta)-\ptil(\theta)\big)^{2}d\theta
		\\ &  =: I_1 + I_2.
		\end{aligned}
		\end{equation*}
		Then the estimate \eqref{MISEest} follows from Theorem \ref{MISEpphat}(b) and Theorem \ref{thm::bound_interp_est}(c). Using that estimate we can ask the following question. Suppose that we chose $\Delta x$ to be a
		function of the number of samples so that
		\begin{equation}\label{dxfunc}
		c_{1}\|\N(n)\|^{-\kappa} \leq \Delta x (n) \leq c_{2} \| \N(n) \|^{-\kappa}
		\end{equation}
		for some constants $c_{1},c_{2}$ and $\kappa$. Clearly, one would not like $\Delta x$ to be excessively small in order to
		avoid difficulties that appear with round-off error when computing. On the other hand one would like the error
		to converge to zero as fast as possible. To find the smallest rate $\kappa$ for which the asymptotic
		rate achieves its maximum, we define the function
		\[
		R(\kappa) := -\lim_{\|\N(n)\| \to \infty} \log_{\|\N(n)\|} \MISE(p,\ptil\ | \ \underline{\Omega}^{M,\N(n)})
		\]
		that describes the asymptotic rate of convergence of the mean integrated squared error. By \eqref{MISEest} we have
		\begin{equation}\label{miserate}
		R(\kappa) = \left\{
		\begin{aligned}
		& 2\bar{\beta}(\alpha),& \kappa  \geq  \bar{\beta}(\alpha)\left(\frac{1}{l+1}+\frac{1}{q}\right) & \\
		& \left(\kappa-\alpha\right)2(l+1), & \kappa < \bar{\beta}(\alpha)\left(\frac{1}{l+1}+\frac{1}{q}\right) &
		\end{aligned}\right.
		\end{equation}
		It is obvious that the smallest rate for which the function $R(\kappa)$ achieves its maximum value of
		$2\bar{\beta}(\alpha)$ is given by $\kappa=\bar{\beta}(\alpha)\left(\frac{1}{l+1}+\frac{1}{q}\right)$. Then \eqref{dxfunc} and \eqref{miserate} imply \eqref{MISEestthm_b}.
	\end{proof}

		\section{Examples}\label{section:num_exp}
	
	\begin{figure}%[htb]
		\centering
		\begin{subfigure}[t]{0.45\textwidth}
			\centering
			\includegraphics[width=\textwidth]{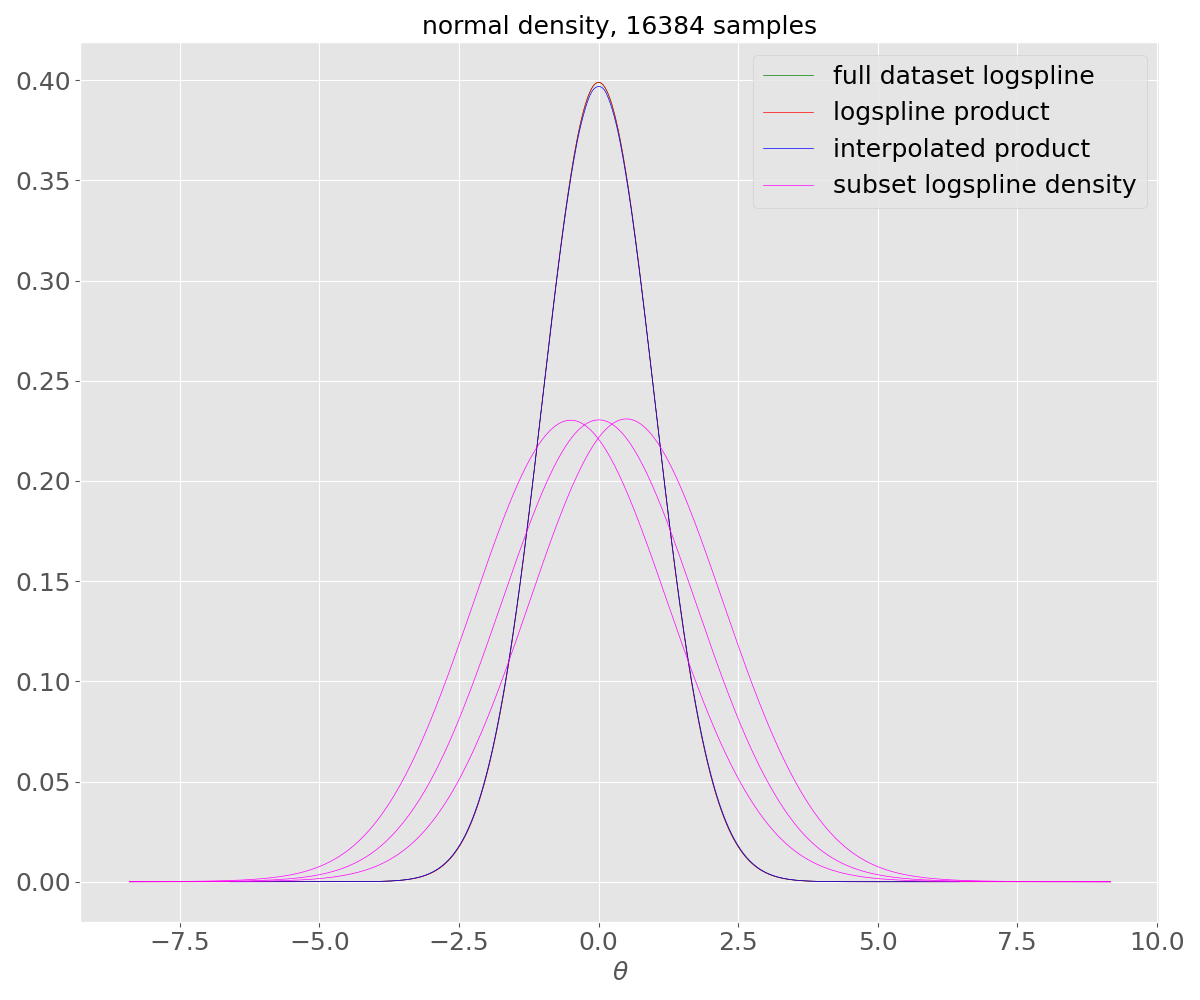}
			\caption{\footnotesize Subset and full-data posterior densities}\label{fig::norm_dens}
		\end{subfigure}
		~~
		\begin{subfigure}[t]{0.45\textwidth}
			\centering
			\includegraphics[width=\textwidth]{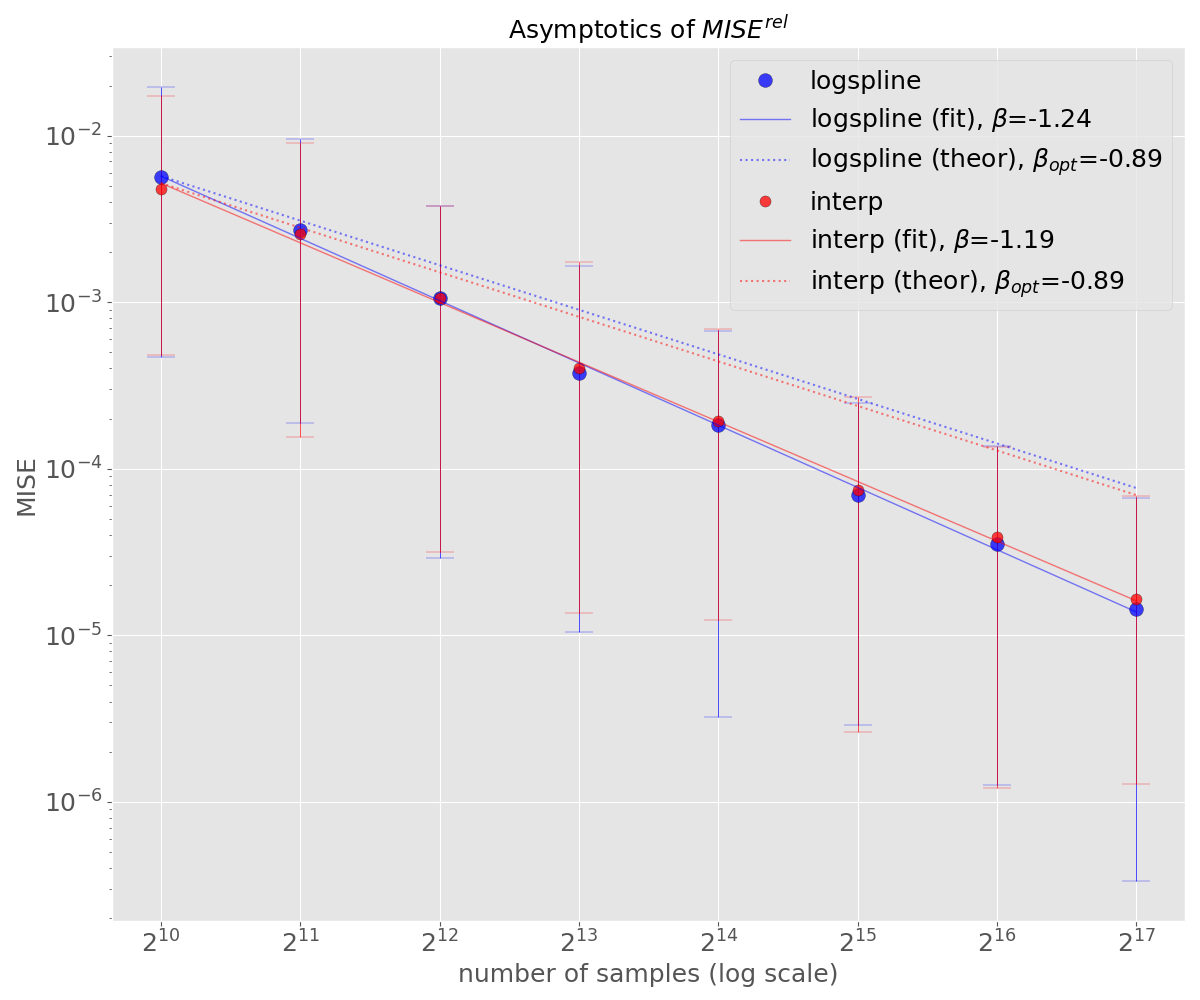}
			\caption{\footnotesize Error rate comparison}\label{fig::norm_error}
		\end{subfigure}
		\caption{\footnotesize The numerical result for the normal sub-posterior densities and their corresponding error rates.}\label{fig::norm_exp}
	\end{figure}
	
	\subsection{Numerical experiment with normal subset posterior densities}
	In this section we assume that the dataset $\textbf{x} = \{x_1,\dots,x_n\}$ contains random samples from $\mathcal{N}(\mu,\sigma^2)$ with unknown $\mu$ and known $\sigma^2$ and we will be constructing the posterior density for $\mu$. Consider the estimator introduced in this manuscript in the form
	\begin{equation}\label{Ex.1::Neis}
	p(\theta|\textbf{x})\propto p(\theta)p(\textbf{x}_1,\dots,\textbf{x}_M|\theta)\propto\prod_{m=1}^{M}p(\theta|\textbf{x}_m), \quad \text{where} \quad p(\theta|\textbf{x}_m)=p(\textbf{x}_m | \theta)p(\theta)^{1/M}.
	\end{equation}
	For each sub-posterior density we  choose the prior to be $p(\theta)^{1/M}$ where $p(\theta)$ is an uninformative prior. In this case each subset posterior has the form
	\[
	p(\theta|\textbf{x}_{m}) \propto L(\theta|\textbf{x}_{m}) \sim \mathcal{N}\left(\frac{T_m}{n_m},\frac{\sigma^2}{n_m}\right), \quad T_m = \sum_{x\in \textbf{x}_{m}} x
	\]
	where each $n_m=|\textbf{x}_{m}|$. Plugging in the above form of each subset posterior into \eqref{Ex.1::Neis}, a detailed calculation yields the following result for the form of the full-data posterior
	\[
	p(\theta|\textbf{x})\sim \mathcal{N}\left(\frac{T}{n},\frac{\sigma^2}{n}\right),\quad T=\sum_{m}T_m,
	\]
	and $n=\sum_{m}n_m$ is the total number of samples.
	
	Since the main objective of our paper is the theoretical derivation of the convergence rate of the Mean Intergrated Squared Error outlined in Theorem \ref{MISEestthm}, we provide the numerical evidence for this theoretical rate in our example by partitioning the samples into $M=3$ subsets and $n_m=\frac{n}{3}$. Then, we choose $\frac{T_m}{n_m} = (m-2)\frac{1}{2}, \ m=1,2,3$, and $\frac{\sigma^2}{n_m}=3$. This implies that $T=0$ and $\frac{\sigma^2}{n}=1$. This leads to the subset posteriors being
	\[
	p(\theta|\textbf{x}_m) \sim \mathcal{N}\left(\frac{m-2}{2},3\right)
	\]
	which in turn yields the full-data posterior of $\theta=\mu$ to be
	\[
	p(\theta|\textbf{x}) \sim \mathcal{N}\left(0,1\right).
	\]
Figure \ref{fig::norm_exp} (a) shows the expected subset posterior densities as well as the full posterior density estimators. Figure \ref{fig::norm_exp} (b) shows the convergence rates.

	\subsection{Numerical experiment with gamma subset posterior densities}
	
	\begin{figure}%[htb]
		\centering
		\begin{subfigure}[t]{0.45\textwidth}
			\centering
			\includegraphics[width=\textwidth]{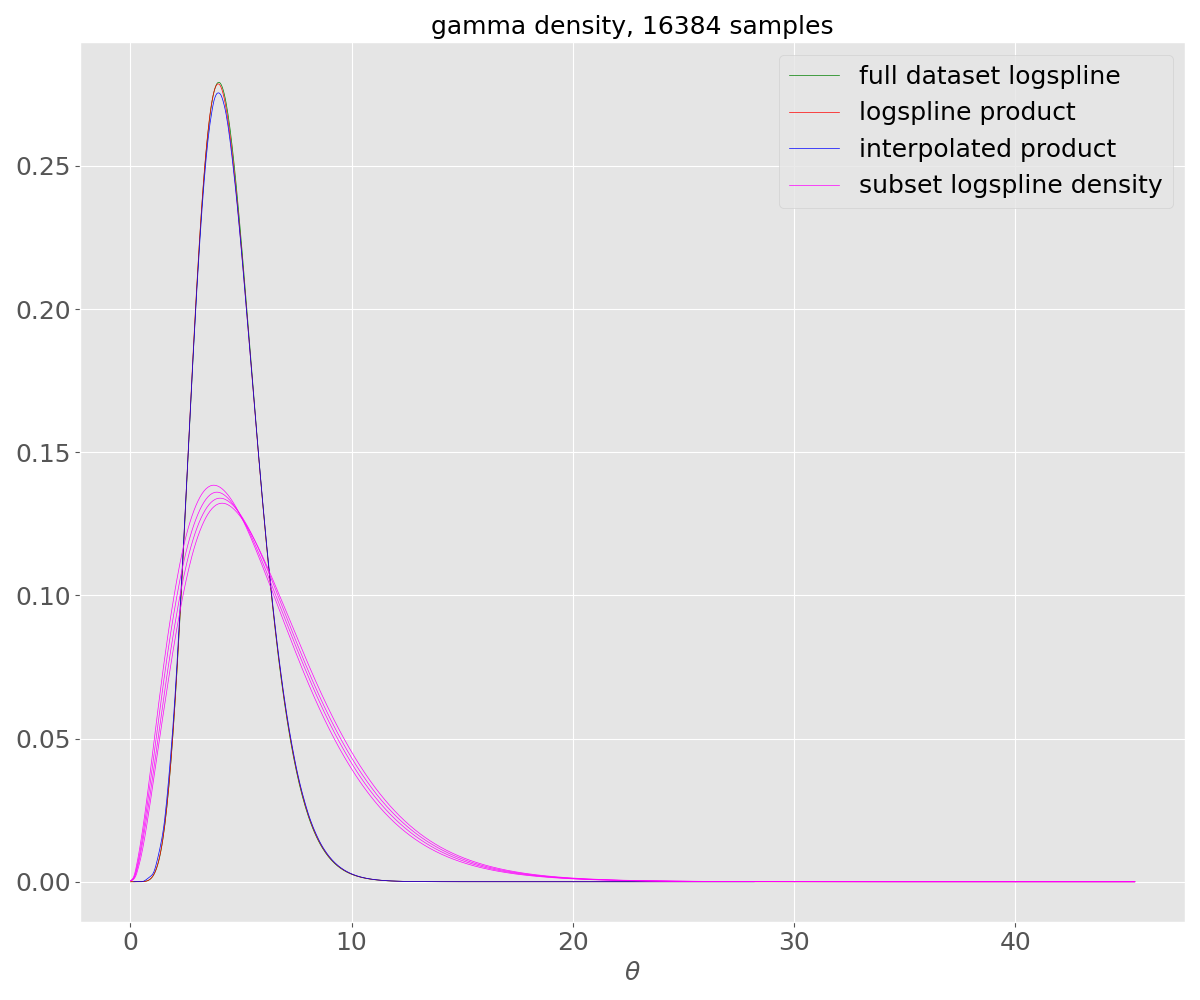}
			\caption{\footnotesize Subset and full-data posterior densities}\label{fig::gamma_dens}
		\end{subfigure}
		~~
		\begin{subfigure}[t]{0.45\textwidth}
			\centering
			\includegraphics[width=\textwidth]{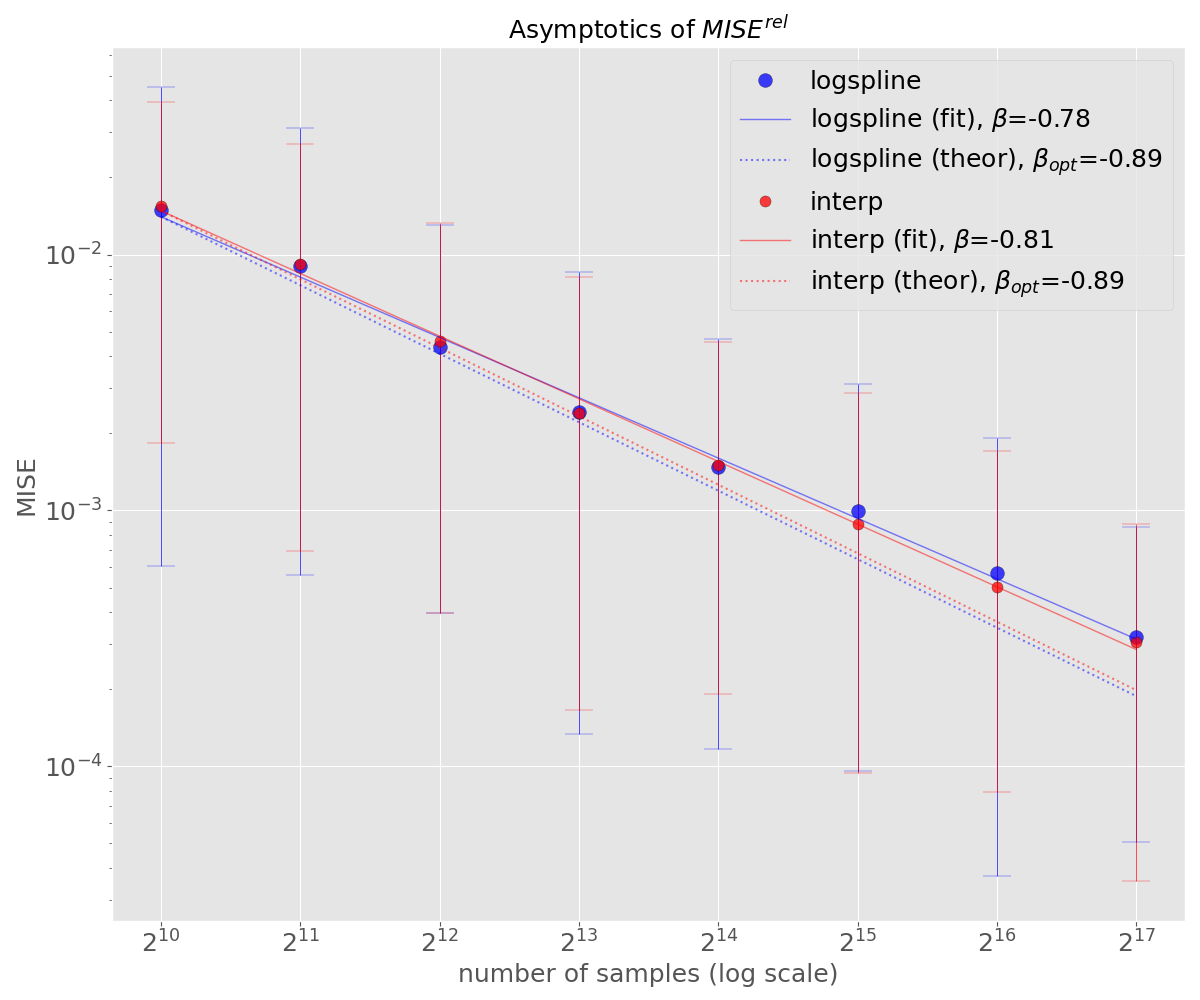}
			\caption{\footnotesize Error rate comparison}\label{fig::gamma_error}
		\end{subfigure}
		\caption{\footnotesize The numerical result for the gamma sub-posterior densities and their corresponding error rates.}\label{fig::gamma_exp}
	\end{figure}

	In this section we assume that the dataset $\textbf{x} = \{x_1,\dots,x_n\}$ contains random samples from $Poisson(\theta)$ (with $\theta$ unknown), and we will be constructing the posterior density for $\theta$. For each subset posterior density we choose the prior
	\[
	p_m(\theta)= Gamma(\bar{\alpha},\bar{\beta})=:q(\theta),
	\]
	which gives the subset posterior in the form of
	\[
		p(\theta|\textbf{x}_m) \propto L(\theta|\textbf{x}_m)p_m(\theta)= Gamma\left(\bar{\alpha}+T_m,n_m + \bar{\beta}\right), \quad T_m = \sum_{y\in \textbf{x}_m}y.
	\]
	We then take the full dataset prior
	\[
	p(\theta)=Gamma(M(\bar{\alpha}-1)+1, M\bar{\beta}) \propto q(\theta)^M
	\]
	which leads to
	\begin{equation*}
	p(\theta|\textbf{x}_1,\dots,\textbf{x}_M)=Gamma\left(T + M(\bar{\alpha}-1)+1, n + M\bar{\beta}\right), \quad T=\sum_{m=1}^M T_m,\ n=\sum_{m=1}^M n_m.
	\end{equation*}
	From the above formula we also obtain
	\begin{equation*}
		p(\theta|\textbf{x}_1,\dots,\textbf{x}_M) \propto p(\theta)\prod_{m=1}^Mp(\textbf{x}_m|\theta) \propto \prod_{m=1}^M p(\theta|\textbf{x}_m), \quad \text{where} \ p(\theta|\textbf{x}_m)=p(\textbf{x}_m|\theta)p(\theta)^{1/M},
	\end{equation*}
	which is the full dataset posterior as proposed by Neiswanger et al. \cite{Neiswanger}.
	
	Since the main objective of our paper is the theoretical derivation of the convergence rate of the Mean Intergrated Squared Error outlined in Theorem \ref{MISEestthm}, we provide the numerical evidence for this theoretical rate in our example by partitioning the samples into $M=4$ subsets and $n_m=\frac{n}{4}$. Then,  we choose $T_m =1+\epsilon_m$, where $\epsilon_m = -\frac{1}{10}+(m-1)\frac{1}{15}, \ m=1,2,3,4$. This implies that $T=4$. We next pick $\bar{\alpha} = \frac{7}{4}$ and $\frac{n}{4}+\bar{\beta} = \frac{1}{2}$. This gives $n+M\bar{\beta} = 2$. This leads to the subset posteriors being
	\[
	p(\theta|\textbf{x}_m) = Gamma\left( 1+\epsilon_m, \frac{1}{2} \right)
	\]
	which in turn yields the full-data posterior of $\theta$ to be
	\[
	p(\theta|\textbf{x}) = Gamma(8,2).
	\]
 Figure \ref{fig::gamma_exp} (a) shows the expected subset posterior densities as well as the full posterior density estimators. Figure \ref{fig::gamma_exp} (b) shows the convergence rates. It is clear that in this example we wanted to test how skewness impacts the error rate. For this reason we chose $n$ appropriately because when $n$ is large the posterior is asymptotically normal.

\begin{figure}%[htb]
		\centering
		\begin{subfigure}[t]{0.45\textwidth}
			\centering
			\includegraphics[width=\textwidth]{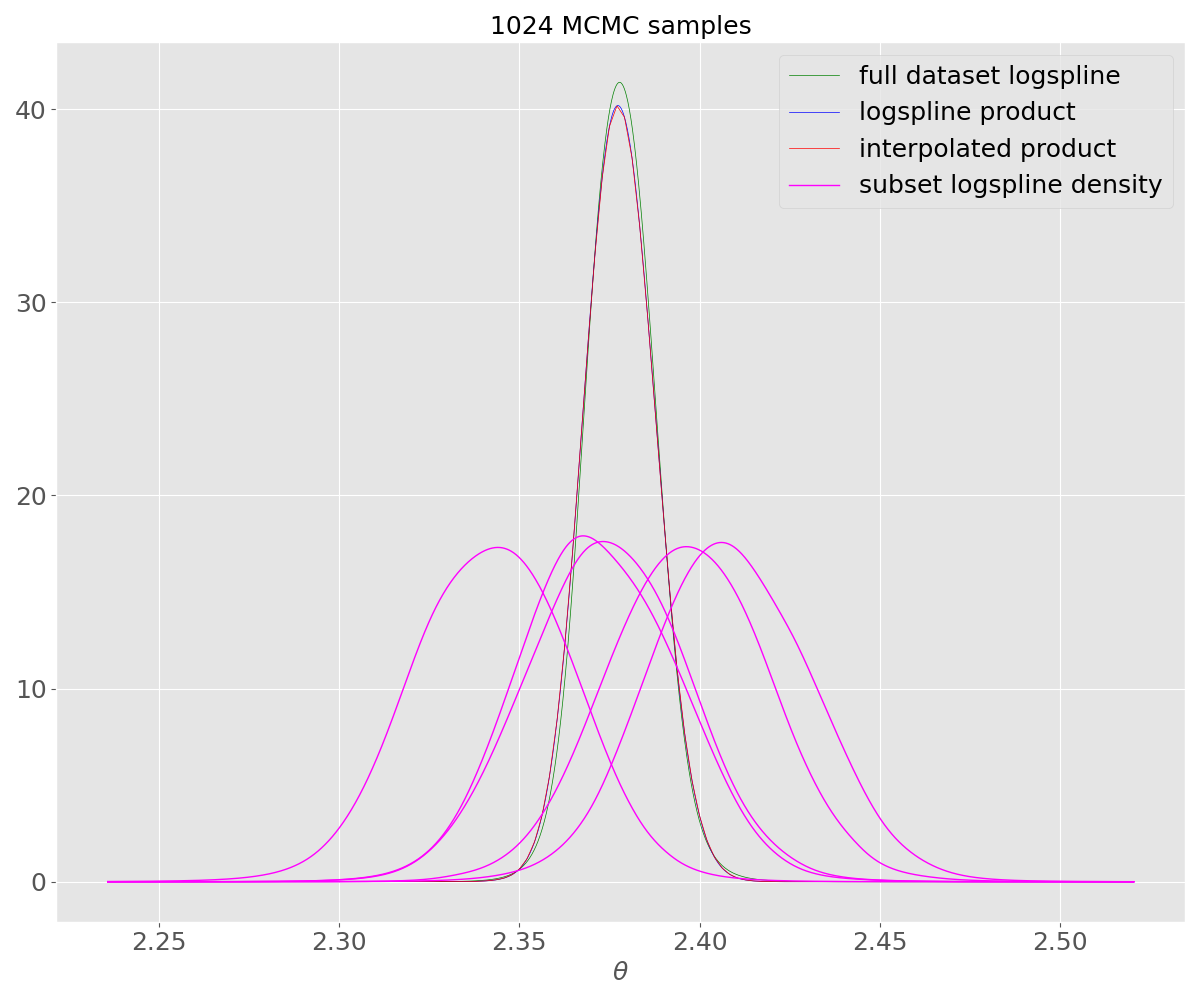}
			% \caption{\footnotesize Subset and full-data posterior densities}\label{fig::flight_dens}
		\end{subfigure}
		~~
		\begin{subfigure}[t]{0.45\textwidth}
			\centering
			\includegraphics[width=\textwidth]{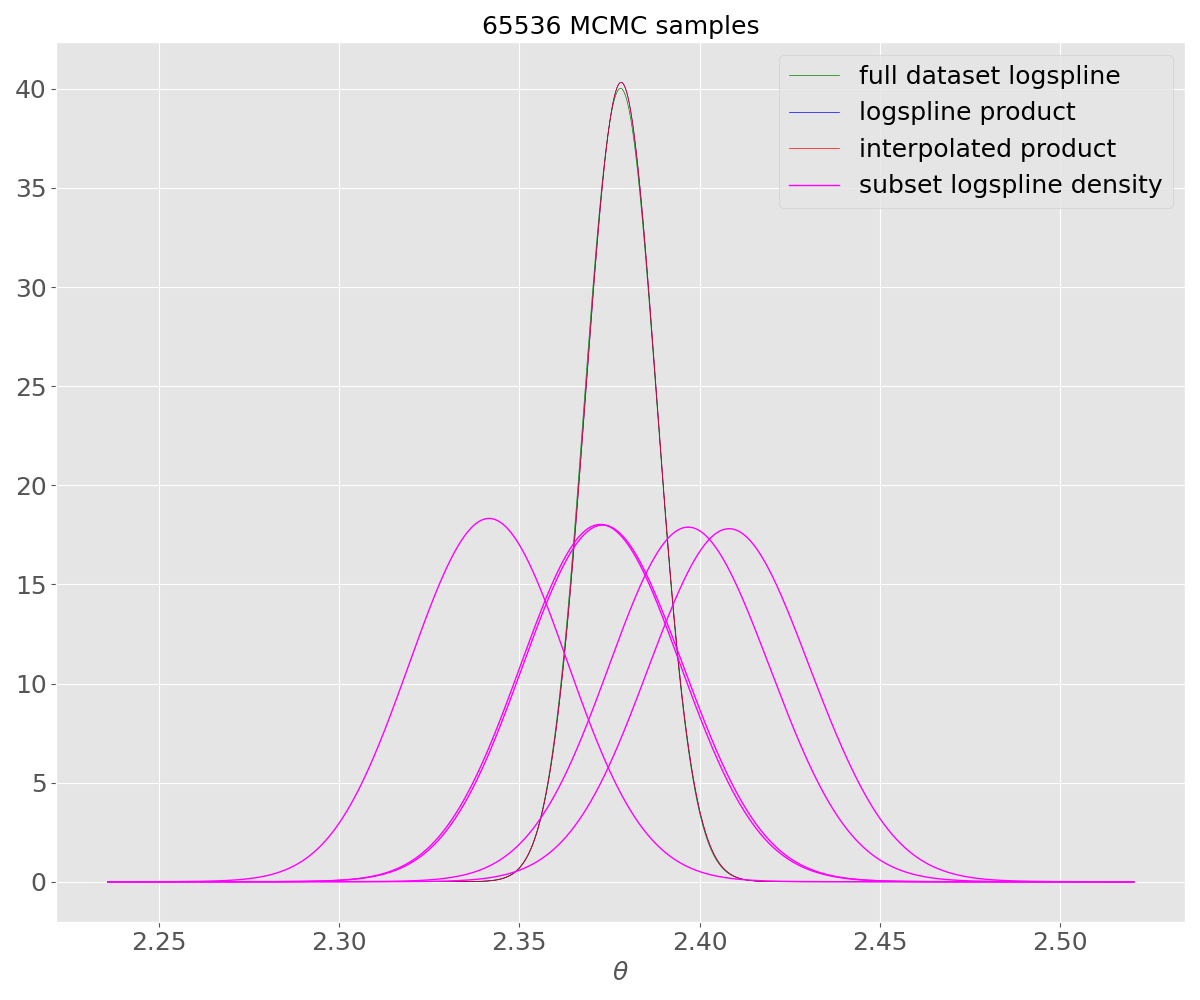}
			% \caption{\footnotesize Error rate comparison}\label{fig::flight_error}
		\end{subfigure}
		\caption{\footnotesize Posterior densities of the shape parameter $\alpha$ for flight arrival delay. $M=5$.}\label{fig::flight_dens}
\end{figure}

\subsection{Numerical experiment using real data on air flights}

In this experiment, we analyze real data obtained from the U.S. Department of Transportation \cite{DOT} on all commercial flights within the United States for the three-month period between January 2019 and March 2019.

The random variable of interest is the arrival delay in minutes for each flight, defined as the difference between the scheduled and actual arrival time. We removed values that are less than fifteen minutes, which are considered on-time \cite{DOT}. Doing so, we obtain a dataset containing  $n=101560$ samples, and we applied a square root transformation to the data set so that it follows an approximate Gamma distribution. The Bayesian model for the transformed data values $\{x_i\}_{i=1}^n$ is then chosen to be 
\[
x_i \sim \alpha,\beta, \quad  i=1,\dots,n.
\]

We estimate the shape parameter $\alpha$ and rate parameter $\beta$ using a Bayesian Gamma model with uninformative prior distributions for $\alpha$ and $\beta$ given by
\[
\alpha,\beta \sim Unif[0.00001,10000]
\]

We divide the full data set into M=5 subsets of size $n_j=20312$,  $j\in\{1,\dots,5\}$, and use the above uninformative priors for both full data set posterior as well as subset posteriors.

Using the python package \texttt{theano}, we generate samples of the pair $(\alpha,\beta)$ for each of the data subsets as well as the full dataset. We then use MCMC samples to produce estimates of marginal posterior densities. In our analysis, we focus on the shape parameter $\alpha$. In particular, we produce the full dataset logspline posterior density $\hat{p}(\alpha;N_0|X)$, with $N_0$  being the number of MCMC samples, the subset logspline posterior densities $\hat{p}(\alpha;N_j|X_j )$, $j=1,\dots,M$, and the estimate of the full dataset of the logspline posterior density $\phat(\alpha;N|X_1,\dots,X_m)$ constructed via the product of the subset posterior densities, where $N=(N_1,\dots,N_M)$, and $N_j$ the number of MCMC samples used to estimate the subset posterior density.

Since the true posterior is unknown, we estimate the expected squared difference between the full dataset posterior density of the shape parameter $\alpha$ and the one obtained by producting the subset posterior densities. In particular, we numerically estimate the following

\begin{equation}\label{empmise}
\MISE[\hat{p}(\alpha;N_0|X),\hat{p}(\alpha;N|X_1,\dots,X_m)]=
\EE\bigg[\int (\hat{p}(\alpha;N_0|X)-\hat{p}(\alpha;N|X_1,\dots,X_m))^2 d\alpha\bigg]
\end{equation}

\begin{figure}%[htb]
		\centering
		\begin{subfigure}[t]{0.45\textwidth}
			\centering
			\includegraphics[width=\textwidth]{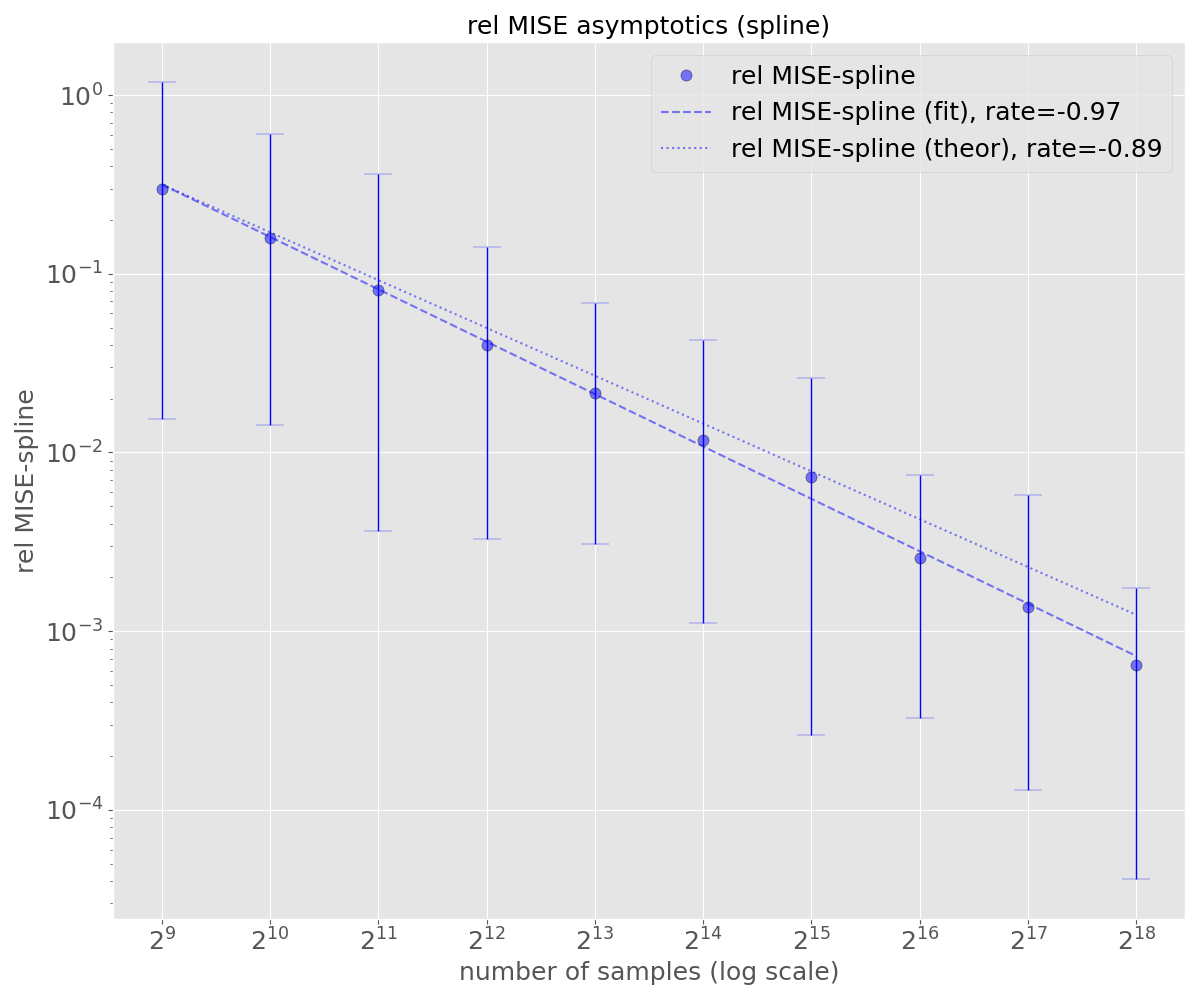}
			% \caption{\footnotesize Subset and full-data posterior densities}\label{fig::flight_dens}
		\end{subfigure}
		~~
		\begin{subfigure}[t]{0.45\textwidth}
			\centering
			\includegraphics[width=\textwidth]{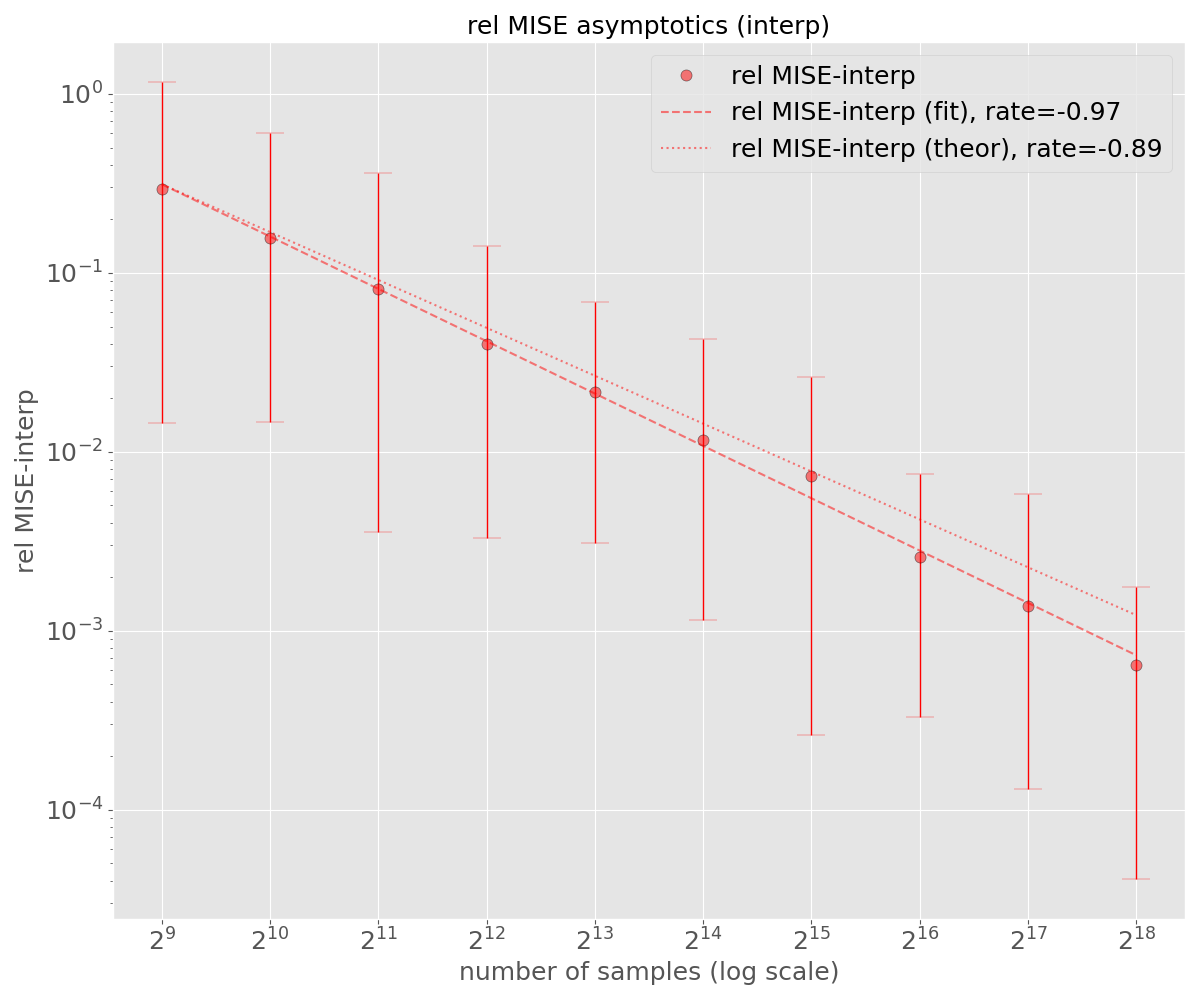}
			% \caption{\footnotesize Error rate comparison}\label{fig::flight_error}
		\end{subfigure}
		\caption{\footnotesize Convergence rates for product posterior density towards full dataset posterior.}\label{fig::flight_err}
\end{figure}

\subsubsection{Description of the experiment}

We generate $n=2^{18}$ MCMC samples of the pair $(\alpha,\beta)$ for the full dataset posterior density and the same number of samples for the posterior density on each of 5 data subsets. We use $4$   MCMC chains and $2^{16}$  iterations as burn-in. We then construct the estimated full dataset posterior density of $\alpha$ and its subset posterior density estimators using the logspline density estimation R package \texttt{logspline} based on the number of MCMC samples $n \in \{2^9,2^{10},\dots,2^{18}\}$. We then construct the product posterior estimator, which is compared to the estimated full set posterior by computing the integrated square difference
\begin{equation}\label{empise}
\int \Big(\hat{p}(\alpha;n|X)-\hat{p}(\alpha;N=(n,\dots,n)|X_1,\dots,X_m) \Big)^2 d\alpha.
\end{equation}

To estimate MISE defined in \eqref{empmise}, we repeat the above experiment $50$ times and then average the quantity in \eqref{empise}. We use these experiments to also construct the confidence interval for MISE for each $n\in\{2^9,2^{10},\dots,2^{18}\}$.

The full dataset, subset and product posteriors from the experiment are depicted in Figure~\ref{fig::flight_dens}. Observe that the variance of the subset posterior densities is larger than that of the full dataset posterior due to the fewer number of samples in each subset.

The rate of convergence of relative $\MISE(n)$ as a function of $n$ is depicted in Figure \ref{fig::flight_err} in a log-log scale and compared to the theoretical rate $2 q/(2q+1)=8/9$, where $q=4$ is the order of splines used in the package logspline. A similar analysis is performed for the interpolated version of logspline densities; see Figure \ref{fig::flight_err}. Observe that the numerical rate is indeed larger than the theoretical rate, which is consistent with the theoretical upper bound we obtained in Theorem \ref{thm::bound_unnorm_est}. We do believe that the reason why we are doing better than what our theory specifies is due to the fact that the numerical estimates obtained from the logspline density estimation package are based on maximizing the likelihood, which differs from our analysis based on the bias-variance trade-off. These numerics also indicate that the bias-variance analysis does not provide a tight estimate.

Figure \ref{fig::flight_knots} depicts the optimal number of knots selected by the logspline package based on the MLE. Observe that, to achieve our theoretical upper bound $n^{-q/(2q+1)}$  for MISE, the number of knots, assumed to be approximately uniformly distributed by hypothesis \eqref{hyp6:hmaxcond}, have to be picked according to the rate $1/(2q+1)=1/9$, where $q=4$ the order of splines used in the package. The rate we see from the knots selected by the package is significantly lower. We believe this occurs because the MLE analysis is not only attempting to find the optimal number of knots, but also select their optimal position, which allows for smaller number of knots. For instance, this is a known phenomenon in interpolation theory \cite{Phillips}.

\begin{figure}%[htb]
		\centering
		\begin{subfigure}[t]{0.45\textwidth}
			\centering
			\includegraphics[width=\textwidth]{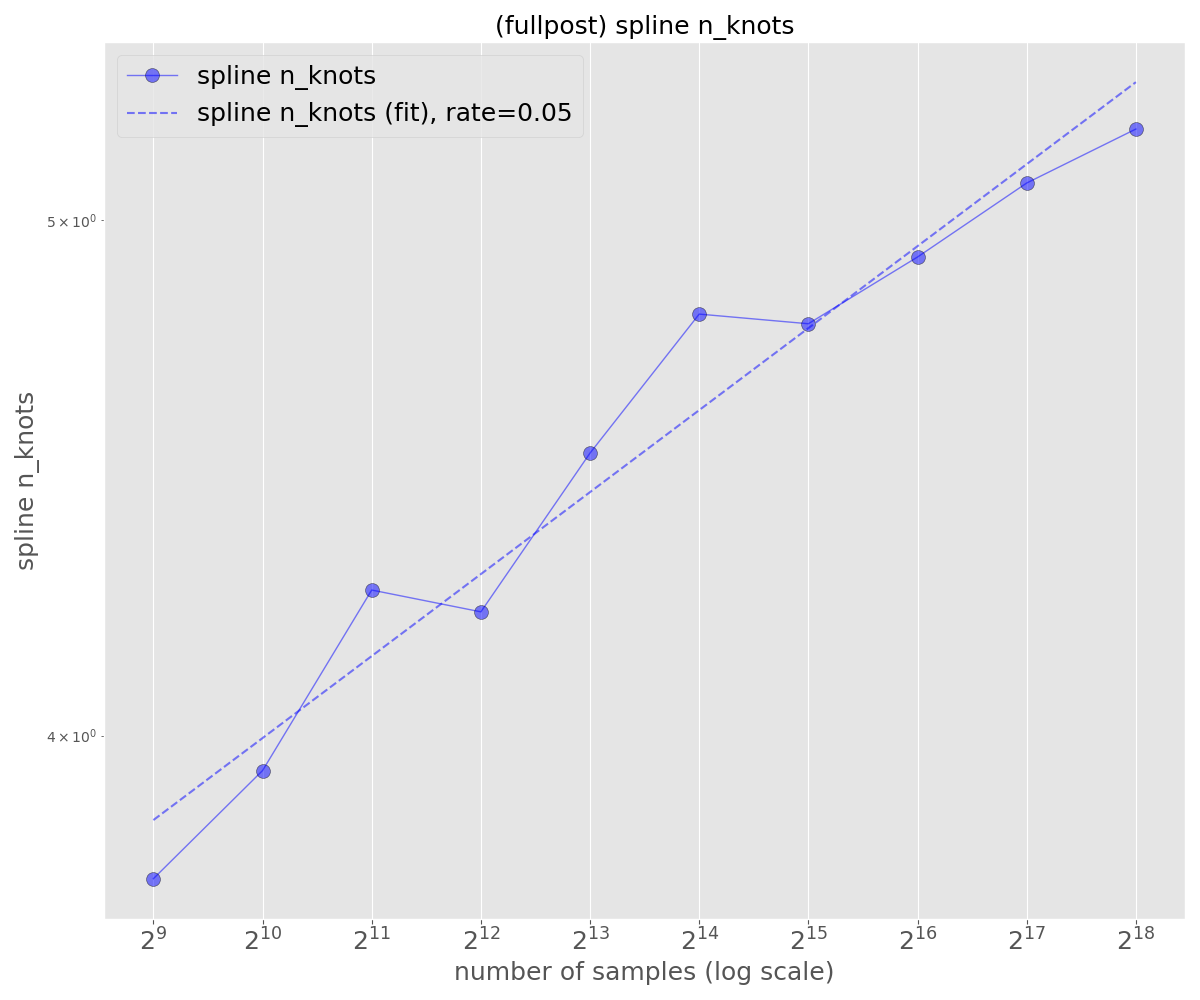}
			% \caption{\footnotesize Subset and full-data posterior densities}\label{fig::flight_dens}
		\end{subfigure}
		~~
		\begin{subfigure}[t]{0.45\textwidth}
			\centering
			\includegraphics[width=\textwidth]{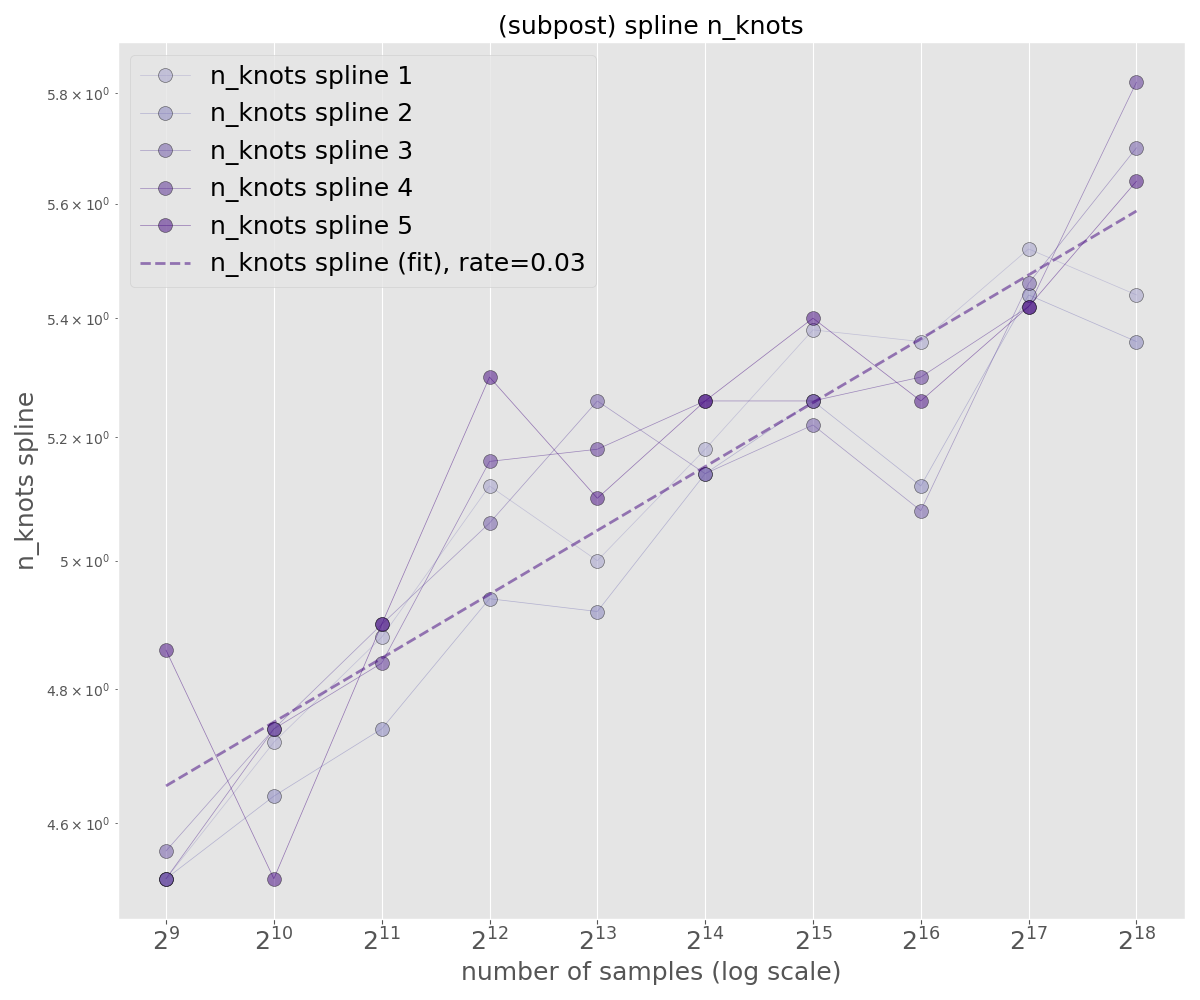}
			% \caption{\footnotesize Error rate comparison}\label{fig::flight_error}
		\end{subfigure}
		\caption{\footnotesize Expected number of spline knots.}\label{fig::flight_knots}
\end{figure}

\paragraph{Remark on the \texttt{logspline} package.} We found that working with the logspline package was challenging, due to its instability. The package allows the user to set the parameter \texttt{nknots} that plays the role of the minimum number of distinct knots used by the MLE, which then incrementally increases the number of knots from \texttt{nknots} to \texttt{max\_knots} and picks the optimal number of knots corresponding to the largest MLE. In our experiment, we picked the maximal number of knots to be \texttt{max\_knots} $\sim q+n^{\frac{1}{2q+1}}$, while the minimum number of distinct knots was always set as $3$, meaning that two were repeated. This was done on purpose because in some cases the logspline estimator would diverge for any value of the parameter \texttt{nknots}$\geq4$, which would make conducting 50 experiments difficult.

\paragraph{Remark on MISE.} Note the true MISE can be bounded by
\[
\begin{aligned}
&\EE\bigg[\int (p(\alpha|X)-\hat{p}(\alpha|X_1,\dots,X_m))^2 d\alpha\bigg]\\
& \leq \EE\bigg[\int (p(\alpha|X)-\hat{p}(\alpha|X))^2 d\alpha\bigg]
+\EE\bigg[\int (\phat(\alpha|X)-\hat{p}(\alpha|X_1,\dots,X_m))^2 d\alpha\bigg]\\
\end{aligned}
\]

We however evaluate the convergence rate of the second term on the right-hand side. Thus, though we cannot estimate the rate of convergence of the first term on the right-hand side above, as long as it converges with the same rate or faster than the second term, the empirical rate of convergence of MISE will be equal to that of \eqref{empmise}.

\section{Appendix}\label{section:appendix}
	\subsection{Notation and hypotheses}\label{section:not_hyp}
	
	For the convenience of the reader we collect in this section all hypotheses and results relevant to our analysis and present the notation that is utilized throughout the article.
	
	We assume that the probability densities $p_1,\dots,p_M$ satisfy the following hypotheses:
	\begin{enumerate}
	 	\renewcommand{\labelenumi}{\textbf{(\theenumi)}}
	 	\renewcommand{\theenumi}{H\arabic{enumi}}
		\renewcommand{\labelenumi}{\textbf{(\theenumi)}}
		\renewcommand{\theenumi}{H\arabic{enumi}}
		
		\item\label{hyp1:unifconvNh} The number of samples for each subset are parameterized by a governing parameter $n$ as follows:
		\[
		\begin{aligned}
		\N(n)&=\{N_1(n),N_2(n),N_3(n),\ldots,N_M(n)\}:\mathbb{N}\to\mathbb{N}^M\\
		\end{aligned}
		\]
		such that for all $m \in \{1,2,\ldots, M\}$
		\begin{equation*}
		D_1\leq \frac{N_m}{n}\leq D_2
		\end{equation*}
		Note that $C_1\|\N(n)\| \leq  N_{m}(n) \leq C_2\|\N(n)\| $.
		
		\item\label{hyp2:Lchoice}
		For each $m\in \{1,\dots,M\}, \ k_1=k_2=\dots=k_M=k$ for some fixed $k$ in $\NN$. The number of knots for each $m$ are parameterized by $n$ as follows:
		\begin{equation*}
		\K(n)=\{K_1(n),K_2(n),K_3(n),\ldots,K_M(n)\}:\mathbb{N}\to\mathbb{N}^M\\
		\end{equation*}
		where $K_m(n) + 1$ is the number of knots for B-splines on the interval $[a,b]$ and thus 
		\[
		\JJ(n)=\{J_1(n),J_2(n),J_3(n),\ldots,J_M(n)\}:\mathbb{N}\to\mathbb{N}^M, \quad \text{with} \quad 
		J_m(n)=K_m(n)-k+1
		\]
		and there exists $\lambda(n)$ such that $\ds \lim_{n\to \infty}\lambda(n) = \infty$ and for some $0<\epsilon<\tfrac{1}{2}$
		\[
		\quad \lim_{n\to \infty}\frac{\lambda(n)}{N_m(n)^{1/2-\epsilon}}=0, \quad \text{and} \ c_1 \leq \frac{J_m(n)}{\lambda(n)}\leq c_2.
		\]

		\item\label{hyp3:hnotation} For the knots $T_{K_m(n)}=(t_{i}^{m})_{i=0}^{K_m(n)}$, we write
		\begin{equation*}
		\bar{h}_m=\max_{k-1 \leq i \leq K_{m}(n)-k}(t_{i+1}^{m}-t_{i}^{m}) \quad \text{and} \quad \underline{h}_{m}=\min_{k-1 \leq i \leq K_{m}(n)-k}(t_{i+1}^{m}-t_{i}^{m}).
		\end{equation*}

		\item\label{hyp4:pcond1} We have either a) for each $m \in \{1,\dots,M\}$,  $p_m \in C^{k}(\RR)$ in which case we set $q=k$ or b) there exists $1 \leq r_0 < k $ such that $p_m \in C^{r}(\RR)$ while for some $m_0$, $p_{m_0} \notin C^{r_0+1}(\RR)$, in which case we set $q=r$.

		\item\label{hyp6:hmaxcond} For each subset $\textbf{x}_{m}$, the B-splines are created by choosing knots that satisfy the following minimal distance rule:
		\[
		 c_1 n^{\alpha} \leq J_m(n) \leq c_2 n^{\alpha}, \quad \alpha \in [0,\tfrac{1}{2})
		\]
		\[
		c_1 J_m^{-1}(n) \leq \underline{h}_m(n)  \leq \bar{h}_m \leq c_2 J_m^{-1}(n).
		\]

	\end{enumerate}

	Here is a brief explanation of the hypotheses. \eqref{hyp1:unifconvNh}  requires that the number of samples across subset posteriors goes to infinity at a similar rate. \eqref{hyp2:Lchoice} is a generalization of the requirement in \cite{StoneKoo,Stone89,Stone90} on the number of knots as a function of the number of samples. \eqref{hyp3:hnotation} presents the notation for the smallest and largest distance between consecutive knots. \eqref{hyp4:pcond1} is a regularity condition on the densities which serves as a compatibility condition with the choice of the order of splines used in the approximation. \eqref{hyp6:hmaxcond}, which is compatible with \eqref{hyp2:Lchoice}, requires the number of knots to grow with the number of samples at a specific rate and in addition imposes bounds on the minimal and maximal distance between knots.
	
	\subsection{B-Splines}
	In this section we will define the logspline family of densities and present an overview of how the logspline density estimator is chosen for the density $p$. The idea behind logspline density estimation of the unknown density $p$ is that the logarithm of $p$ is estimated by a spline function. Thus, we start by introducing basis splines, or B-splines, whose linear combination generates the set of splines of a given order.
	
	\begin{definition}\label{divdifdef}
		The $k$-th divided difference of a function $g:[a,b]\to \RR$ at the knots $t_{0},\dots,t_{k}$ is the leading coefficient of the interpolating polynomial $q$ of order $k+1$ that agrees with $g$ at those knots. We denote this number by
		\begin{equation*}
		[t_{0},\dots,t_{k}]g.
		\end{equation*}
	\end{definition}
	
	\noindent For example, to interpolate a function $g$ using only one knot, say at $t_{0}$, we obtain the constant polynomial $q(x)=g(t_{0})$. Since $g(t_{0})$ is the only coefficient, we have $[t_{0}]g=g(t_{0})$. Now suppose we have two knots $t_{0},t_{1}$. If $t_{0}\neq t_{1}$, then q is the secant line defined by the two points $(t_{0},g(t_{0}))$ and $(t_{1},g(t_{1}))$. The interpolating polynomial will be given by
	\begin{equation*}
	q(x)=g(t_{0})+(x-t_{0})\frac{g(t_{1})-g(t_{0})}{t_{1}-t_{0}}.
	\end{equation*}
	Thus, we write
	\begin{equation*}
	[t_{0},t_{1}]g=\frac{g(t_{1})-g(t_{0})}{t_{1}-t_{0}}=\frac{[t_{1}]g-[t_{0}]g}{t_{1}-t_{0}}.
	\end{equation*}
	If $t_{0}=t_{1}$, we can take the limit $t_{1} \to t_{0}$ above and obtain $[t_{0},t_{1}]g=g'(t_{0})$. By continuing these calculations for more knots yields the following result:
	
	\begin{lemma}\label{recuralgo}
		Given a function g and a sequence of knots $(t_{i})_{i=0}^{k}$, the $k$-th divided difference of $g$ is given by
		\begin{itemize}
			\item[(a)] $[t_{0},\dots,t_{k}]g=\ds\frac{g^{(k)}(t_{0})}{k!}$ when $t_{0}=\dots=t_{k}, g\in C^{k}$.
			\item[(b)]$[t_{0},\dots,t_{k}]g=\ds\frac{[t_{0},\dots,t_{r-1},t_{r+1},\dots,t_{k}]g-[t_{0},\dots,t_{s-1},t_{s+1},\dots,t_{k}]g}{t_{s}-t_{r}}$, where $t_{r}$ and $t_{s}$ are any two distinct knots in the sequence $(t_{i})_{i=0}^{k}$.
		\end{itemize}
	\end{lemma}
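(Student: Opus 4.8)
The plan is to work directly from the defining property of the divided difference $[t_{0},\dots,t_{k}]g$ as the coefficient of $x^{k}$ in the order-$(k+1)$ polynomial $q$ that agrees with $g$ at the knots, exploiting in both parts the uniqueness of the interpolating (Hermite) polynomial of a given order. For part (a), I would observe that when $t_{0}=\dots=t_{k}=:\zeta$ the agreement conditions collapse to the Hermite conditions $q^{(i-1)}(\zeta)=g^{(i-1)}(\zeta)$ for $i=1,\dots,k+1$. The unique polynomial of degree at most $k$ satisfying these is exactly the Taylor polynomial of $g$ at $\zeta$,
\[
q(x)=\sum_{i=0}^{k}\frac{g^{(i)}(\zeta)}{i!}(x-\zeta)^{i},
\]
whose coefficient of $x^{k}$ is $g^{(k)}(\zeta)/k!$. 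Reading off this leading coefficient gives $[t_{0},\dots,t_{k}]g=g^{(k)}(t_{0})/k!$, which is the claim.

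For part (b), the key idea is to assemble the full interpolant from the two lower-order interpolants that each omit one node. Writing $p_{\hat r}$ for the order-$k$ polynomial agreeing with $g$ on the knot list with $t_{r}$ deleted, and $p_{\hat s}$ for the one with $t_{s}$ deleted, I would introduce the candidate
\[
q(x):=\frac{(x-t_{r})\,p_{\hat r}(x)-(x-t_{s})\,p_{\hat s}(x)}{t_{s}-t_{r}},
\]
which has degree at most $k$. One then verifies that $q$ agrees with $g$ at every knot: at a node distinct from $t_{r}$ and $t_{s}$ both $p_{\hat r}$ and $p_{\hat s}$ equal $g$ there and the two linear prefactors combine to the factor $t_{s}-t_{r}$; at $t_{r}$ the factor $(x-t_{r})$ annihilates the first term while $p_{\hat s}(t_{r})=g(t_{r})$; and symmetrically at $t_{s}$.

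By uniqueness of the order-$(k+1)$ interpolant, $q$ must then be the interpolating polynomial for $g$ at $t_{0},\dots,t_{k}$. Comparing coefficients of $x^{k}$ --- each linear prefactor promotes the leading ($x^{k-1}$) term of the corresponding $p$ into the $x^{k}$ slot --- yields
\[
[t_{0},\dots,t_{k}]g=\frac{[t_{0},\dots,t_{r-1},t_{r+1},\dots,t_{k}]g-[t_{0},\dots,t_{s-1},t_{s+1},\dots,t_{k}]g}{t_{s}-t_{r}},
\]
as required.

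The main obstacle is the case of repeated knots, where ``agrees with $g$'' is a Hermite condition on derivatives rather than on plain values, so the three-case check above must be upgraded. At a node $\zeta$ of multiplicity $m$ I would differentiate the defining identity $(t_{s}-t_{r})\,q=(x-t_{r})p_{\hat r}-(x-t_{s})p_{\hat s}$ up to order $m-1$ via the Leibniz rule, feeding in the matching derivative data of $p_{\hat r}$ and $p_{\hat s}$ at $\zeta$, to confirm $q^{(i-1)}(\zeta)=g^{(i-1)}(\zeta)$ for $i=1,\dots,m$. Alternatively, since $t_{r}\neq t_{s}$ are held distinct, I could first establish the identity for fully distinct knots and then pass to the limit as the remaining knots coalesce, using that the interpolation data, and hence the leading coefficient, depends continuously on the node locations. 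Everything beyond this is routine bookkeeping of leading coefficients.
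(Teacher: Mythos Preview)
Your proposal is correct and in fact more complete than what the paper offers. The paper does not give a proof of this lemma: it computes the cases $k=0$ and $k=1$ explicitly, then simply writes ``By continuing these calculations for more knots yields the following result'' and states the lemma without further argument. Your argument --- identifying the Taylor polynomial as the Hermite interpolant for part (a), and the Neville--Aitken-type construction $q(x)=\big((x-t_r)p_{\hat r}(x)-(x-t_s)p_{\hat s}(x)\big)/(t_s-t_r)$ for part (b) --- is the standard rigorous route, and your handling of repeated knots (either via Leibniz on the Hermite conditions or by a continuity/coalescence argument) fills exactly the gap the paper leaves open.
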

	
	Next, we show that B-splines can be expressed as scaled divided differences of a certain power function.
	
	\begin{definition}\label{bsplinedef}
		Let $t=(t_{i})_{i=0}^{N}$ be a nondecreasing sequence of knots. Let $1\leq k \leq N$. The j-th B-spline of order k, with $j\in \{0,1,\dots,N-k\}$, for the knot sequence $(t_{i})_{i=0}^{N}$ is denoted by $B_{j,k,t}$ and is defined by the rule
		\begin{equation*}
		B_{j,k,t}(x)=(t_{j+k}-t_{j})[t_{j},\dots,t_{j+k}]\max_{y}\{(y-x)^{k-1},0\}.
		\end{equation*}
	\end{definition}

	\noindent If the context is clear, we will write $B_{j}$ instead of $B_{j,k,t}$. A direct consequence of the above definition is the support of $B_{j,k,t}$.
	
	\begin{lemma}\label{Bsplinesupp}
		Let $B_{j,k,t}$ be defined as in Definition \ref{bsplinedef}. Then the support of the function is contained in the interval $[t_{j},t_{j+k})$.
	\end{lemma}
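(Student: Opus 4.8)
The plan is to show directly that $B_{j,k,t}(x)=0$ whenever $x\notin[t_j,t_{j+k})$, splitting into the two cases $x\ge t_{j+k}$ and $x<t_j$. The fact I would invoke throughout is the characterization in Definition \ref{divdifdef}: $[t_j,\dots,t_{j+k}]g$ is the leading coefficient of the order-$(k+1)$ polynomial that agrees with $g$ at the knots $t_j,\dots,t_{j+k}$, where ``agrees'' is understood in the Hermite sense of Lemma \ref{recuralgo} when knots coincide. Consequently the divided difference is unchanged if $g$ is replaced by any function matching it (together with the relevant derivatives) on the knot set, and it vanishes as soon as $g$ restricted to the knots is reproduced by a polynomial of degree strictly less than $k$. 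Here $g$ is always $g_x(t):=(t-x)_+^{k-1}$, viewed as a function of $t$ with $x$ held fixed.

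For the case $x\ge t_{j+k}$, I would observe that every knot $t_i$ with $j\le i\le j+k$ satisfies $t_i\le t_{j+k}\le x$, so $t_i-x\le 0$ and hence $(t_i-x)_+^{k-1}=0$. Thus the entire knot set lies in the region where $g_x$ and all of its one-sided derivatives vanish, so the interpolating polynomial is identically zero and its leading coefficient is zero. Therefore $B_{j,k,t}(x)=(t_{j+k}-t_j)\cdot 0=0$. The delicate point is the endpoint $x=t_{j+k}$, which is precisely why the support interval must be taken closed on the left but open on the right.

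For the case $x<t_j$, each knot satisfies $t_i\ge t_j>x$, so $t_i-x>0$, and on the knot set $g_x$ coincides with the ordinary power $(t-x)^{k-1}$, a polynomial in $t$ of degree $k-1$. Since this polynomial already has degree strictly less than $k$, it is its own order-$(k+1)$ interpolant and the coefficient of the top power $t^{k}$ is zero; hence $[t_j,\dots,t_{j+k}]g_x=0$ and again $B_{j,k,t}(x)=0$. Combining the two cases yields $B_{j,k,t}(x)=0$ for all $x\notin[t_j,t_{j+k})$, which is the claim.

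I expect the only genuinely delicate step to be the bookkeeping for coincident knots in the first case: one must check that the Hermite data (values together with derivatives up to the multiplicity of each repeated knot) of $(\cdot-x)_+^{k-1}$ really are all zero. This holds because $(\cdot-x)_+^{k-1}$ has a zero of order $k-1$ at $x$ and vanishes identically to the left of $x$, so that even at a knot of maximal multiplicity landing exactly at $x=t_{j+k}$ the one-sided data used in the divided difference is zero; this is exactly the convention that produces the half-open support $[t_j,t_{j+k})$ rather than the closed interval.
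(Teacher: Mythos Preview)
Your proposal is correct and follows essentially the same approach as the paper: split into the cases $x\ge t_{j+k}$ and $x<t_j$, in the first case observe that all the data $(t_i-x)_+^{k-1}$ vanish so the divided difference is zero, and in the second case note that $(t-x)_+^{k-1}$ coincides with the degree-$(k-1)$ polynomial $(t-x)^{k-1}$ on the knot set, whose $k$th-degree coefficient is zero. Your extra remarks on Hermite data at coincident knots and the half-open endpoint convention are more careful than the paper's version but do not change the argument.
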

	
	\subsection{Recurrence relation and various properties}
	
	Since we stated the definition of B-splines using divided differences, we can use it to state the recurrence relation for B-splines, which is useful to prove various properties of these functions; see \cite[Ch. IX]{de Boor}.
	
	\begin{lemma}\label{Bsplinerecdef}
		Let $t=(t_{i})_{i=0}^{N}$ be a sequence of knots and let $1\leq k \leq N$. For $j\in \{0,1,\dots,N-k\}$ we can construct the $j$-th B-spline $B_{j,k}$ of order $k$ associated with the knots $t=(t_{i})_{i=0}^{N}$ as follows:
		\begin{itemize}
			\item[(1)] First we have $B_{j,1}$ be the characteristic function on the interval $[t_{j},t_{j+1})$ 
			\begin{equation*}
			B_{j,1}(x) = \left \{
			\begin{aligned}
			&1,& & x \in [t_{j},t_{j+1})&\\
			&0,& & x \notin [t_{j},t_{j+1}) &
			\end{aligned}
			\right.
			\end{equation*}
			\item[(2)] The B-splines of order $k$ for $k>1$ on $[t_{j},t_{j+k})$ are given by
			\begin{equation*}
			B_{j,k}(x)=\frac{x-t_{j}}{t_{j+k-1}-t_{j}}B_{j,k-1}(x)+\frac{t_{j+k}-x}{t_{j+k}-t_{j+1}}B_{j+1,k-1}(x).
			\end{equation*}
		\end{itemize}
	\end{lemma}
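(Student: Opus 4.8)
The plan is to prove both parts directly from the divided-difference definition in Definition \ref{bsplinedef}, using the Leibniz formula (Lemma \ref{Leibniz}) and the divided-difference recursion (Lemma \ref{recuralgo}(b)) as the two main engines. For part (1) I would simply unwind the definition at order $k=1$: since $B_{j,1}(x) = (t_{j+1}-t_j)[t_j,t_{j+1}](\cdot-x)_+^{0}$, the two-point formula of Lemma \ref{recuralgo} gives $B_{j,1}(x) = (t_{j+1}-x)_+^{0} - (t_j-x)_+^{0}$. Adopting the standard convention $y_+^{0}=1$ for $y>0$ and $y_+^{0}=0$ for $y\le 0$, I would check the three cases $x<t_j$, $x\in[t_j,t_{j+1})$, and $x\ge t_{j+1}$ to recover exactly the characteristic function of the half-open interval $[t_j,t_{j+1})$; the convention at $y=0$ is what pins down the half-openness.

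For part (2), the starting point is the factorization $(t-x)_+^{k-1} = (t-x)\,(t-x)_+^{k-2}$, valid as functions of $t$ for $k\ge 2$. Writing $g(t)=t-x$ and $h(t)=(t-x)_+^{k-2}$ and applying the Leibniz formula to $[t_j,\dots,t_{j+k}](g\cdot h)$, I would exploit that $g$ is affine in $t$: its divided differences $[t_j,\dots,t_r]g$ vanish for $r\ge j+2$, equal $1$ for $r=j+1$, and equal $t_j-x$ for $r=j$. Hence the Leibniz sum collapses to the two surviving terms
\[
[t_j,\dots,t_{j+k}](\cdot-x)_+^{k-1} = (t_j-x)\,[t_j,\dots,t_{j+k}]h + [t_{j+1},\dots,t_{j+k}]h.
\]

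The delicate point — and the place where the argument does real work — is that the first surviving term is an order-$k$ divided difference of $h=(\cdot-x)_+^{k-2}$ over $k+1$ knots, and so does not by itself match any B-spline of order $k-1$, which by definition uses only $k$ consecutive knots. To resolve this I would apply Lemma \ref{recuralgo}(b) to rewrite $[t_j,\dots,t_{j+k}]h$ in terms of the two order-$(k-1)$ differences $A:=[t_{j+1},\dots,t_{j+k}]h$ and $B:=[t_j,\dots,t_{j+k-1}]h$, namely $[t_j,\dots,t_{j+k}]h = (A-B)/(t_{j+k}-t_j)$. Substituting, multiplying through by $(t_{j+k}-t_j)$, and combining the two fractions, the expression simplifies to $B_{j,k}(x) = (t_{j+k}-x)A - (t_j-x)B$. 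Finally I would identify $A$ and $B$ with lower-order B-splines through the defining relations $B_{j+1,k-1}(x) = (t_{j+k}-t_{j+1})A$ and $B_{j,k-1}(x) = (t_{j+k-1}-t_j)B$, which upon substitution yields the stated recurrence with coefficients $\frac{x-t_j}{t_{j+k-1}-t_j}$ and $\frac{t_{j+k}-x}{t_{j+k}-t_{j+1}}$. The main obstacle I anticipate is precisely this bookkeeping: recognizing that the stray order-$k$ difference must first be reduced via Lemma \ref{recuralgo}(b) before the B-spline identifications apply, and then carrying out the short algebraic combination carefully enough that the two coefficients emerge with the correct signs (the sign flip $-(t_j-x)=x-t_j$ being the one easy place to slip).
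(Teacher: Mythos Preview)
Your proposal is correct and follows essentially the same route as the paper: for part (2) the paper likewise factors $(t-x)_+^{k-1}=(t-x)(t-x)_+^{k-2}$, applies the Leibniz formula (Lemma \ref{Leibniz}) so that only the $r=j$ and $r=j+1$ terms survive, and then uses Lemma \ref{recuralgo}(b) to split the resulting order-$k$ divided difference into the two order-$(k-1)$ pieces that become $B_{j,k-1}$ and $B_{j+1,k-1}$ after multiplying through by $t_{j+k}-t_j$. Your treatment of part (1) is actually more explicit than the paper's, which simply asserts that the $k=1$ case follows directly from Definition \ref{bsplinedef}.
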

	
	The recurrence relation provides further information about B-splines. $B_{j,1}$ is a characteristic function, or otherwise piecewise constant. By Lemma \ref{Bsplinerecdef} (b), since the coefficients of $B_{j,k-1}$ are linear functions of $x$, we have $B_{j,2}$ is a piecewise linear function on $[t_{j},t_{j+2})$. Therefore, inductively we have $B_{j,3}$ is a piecewise parabolic function on $[t_{j},t_{j+3})$, $B_{j,4}$ is a piecewise polynomial of degree 3 on $[t_{j},t_{j+4})$ and so on. This implies that the set generated by B-splines is a subset of the set of piecewise polynomials with breaks at the knots $(t_{i})_{i=0}^N$. In particular, it is the set of piecewise polynomials with certain break and continuity conditions at the knots and this equality occurs on a smaller interval, called the basic interval, formally defined below.
	
	\begin{definition}\label{basicint}
		Suppose $t=(t_{0},\dots,t_{N})$ is a nondecreasing sequence of knots. Then for the B-splines of order $k$, with $2k<N+2$, that arise from these knots, we define $I_{k,t}=[t_{k-1},t_{N-k+1}]$ and call it the basic interval.
	\end{definition}
	
	\begin{remark}
		In order for this definition to be correct, we need to extend the B-splines and have them be left continuous at the right endpoint of the basic interval. For the $N-k+1$ B-splines of order $k>1$, the basic interval is defined in such a way so that at least two of them are always supported on any subinterval of $I_{k,t}$.
	\end{remark}
	
	An important property of B-splines is that they form a partition of unity as stated in the following lemma; see \cite[Ch. IX]{de Boor}.
	
	\begin{lemma}
		Let $B_{j,k,t}$ be the function as given in Definition \ref{bsplinedef} for the knot sequence $t=(t_{i})_{i=0}^{N}$. Then the following hold:
		\begin{itemize}
			\item[(a)] $B_{j,k,t}(x)>0$ for $x\in(t_{j},t_{j+k})$.
			\item[(b)] (Marsden's Identity) For any $\alpha\in \R$, we have $(x-\alpha)^{k-1}=\sum_{j}\psi_{j,k}(\alpha)B_{j,k,t}(x)$, where $\psi_{j,k}(\alpha)=(t_{j+1}-\alpha)\dots(t_{j+k-1}-\alpha)$ and $\psi_{j,1}(\alpha)=1$.
			\item[(c)] $\sum_{j}B_{j,k,t}=1$ on the basic interval $I_{k,t}$.
		\end{itemize}
	\end{lemma}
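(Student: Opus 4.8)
The plan is to prove all three parts by induction on the order $k$, using the recurrence relation of Lemma \ref{Bsplinerecdef} as the engine, with the support information of Lemma \ref{Bsplinesupp} to control where each summand is nonzero. Each part's inductive step feeds on the conclusion at the previous order, and part (c) will be deduced from part (b).

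For (a), the base case $k=1$ is immediate, since $B_{j,1}$ is the indicator of $[t_j,t_{j+1})$ and hence positive on $(t_j,t_{j+1})$. For the inductive step I would fix $x\in(t_j,t_{j+k})$ and read off the recurrence
\[
B_{j,k}(x)=\frac{x-t_{j}}{t_{j+k-1}-t_{j}}B_{j,k-1}(x)+\frac{t_{j+k}-x}{t_{j+k}-t_{j+1}}B_{j+1,k-1}(x),
\]
observing that both scalar coefficients are nonnegative on $(t_j,t_{j+k})$ and that, by Lemma \ref{Bsplinesupp}, the two lower-order splines are supported on $[t_j,t_{j+k-1})$ and $[t_{j+1},t_{j+k})$. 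Splitting into the cases $x\in(t_j,t_{j+k-1})$ and $x\in[t_{j+k-1},t_{j+k})$, in the first the coefficient of $B_{j,k-1}$ is positive and $B_{j,k-1}(x)>0$ by induction, while in the second the coefficient of $B_{j+1,k-1}$ is positive and $B_{j+1,k-1}(x)>0$ by induction; either way $B_{j,k}(x)>0$. The only nuisance is coincident knots producing a vanishing denominator; by the standing convention such a term is set to zero, and one checks that whenever a denominator vanishes the associated $x$-range is empty, so positivity is unaffected.

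For (b) I expect the real work. After the base case $k=1$ (where $\psi_{j,1}\equiv 1$ and the claim reduces to the order-one partition of unity), I would substitute the recurrence into $\sum_j\psi_{j,k}(\alpha)B_{j,k}(x)$ and collect the coefficient of each $B_{j,k-1}(x)$, which receives a contribution from the first half of the $j$-th term and from the second half of the $(j-1)$-th term, namely
\[
\psi_{j,k}(\alpha)\frac{x-t_j}{t_{j+k-1}-t_j}+\psi_{j-1,k}(\alpha)\frac{t_{j+k-1}-x}{t_{j+k-1}-t_j}.
\]
Using the factorizations $\psi_{j,k}(\alpha)=(t_{j+k-1}-\alpha)\psi_{j,k-1}(\alpha)$ and $\psi_{j-1,k}(\alpha)=(t_j-\alpha)\psi_{j,k-1}(\alpha)$, this collapses through the affine-weight identity
\[
(t_{j+k-1}-\alpha)\frac{x-t_j}{t_{j+k-1}-t_j}+(t_j-\alpha)\frac{t_{j+k-1}-x}{t_{j+k-1}-t_j}=x-\alpha
\]
to $(x-\alpha)\psi_{j,k-1}(\alpha)$, so that $\sum_j\psi_{j,k}(\alpha)B_{j,k}(x)=(x-\alpha)\sum_j\psi_{j,k-1}(\alpha)B_{j,k-1}(x)=(x-\alpha)(x-\alpha)^{k-2}$ by the inductive hypothesis. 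The main obstacle is the index bookkeeping in re-indexing the double sum and tracking the boundary terms at the two ends of the finite knot sequence, again together with the coincident-knot convention in the weights.

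Finally, for (c) I would read off the partition of unity from Marsden's identity rather than prove it afresh. Both sides of $(x-\alpha)^{k-1}=\sum_j\psi_{j,k}(\alpha)B_{j,k,t}(x)$ are polynomials in $\alpha$ of degree $k-1$, and equality for all $\alpha$ forces equality of coefficients. Since $\psi_{j,k}(\alpha)=(t_{j+1}-\alpha)\cdots(t_{j+k-1}-\alpha)$ has leading term $(-1)^{k-1}\alpha^{k-1}$ and $(x-\alpha)^{k-1}$ has leading term $(-1)^{k-1}\alpha^{k-1}$, comparing the coefficients of $\alpha^{k-1}$ yields $1=\sum_j B_{j,k,t}(x)$. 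The restriction to the basic interval $I_{k,t}$ is exactly what makes the summation range complete for every $x\in I_{k,t}$, so the identity is the asserted partition of unity there; near the ends of $[t_0,t_N]$ some of the B-splines needed to complete the sum are absent, which is precisely why the claim is stated only on $I_{k,t}$.
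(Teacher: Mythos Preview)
Your proofs of (a) and (b) are correct and follow essentially the same inductive scheme as the paper: the recurrence is substituted into the sum, the coefficient of each $B_{j,k-1}$ is collected, the factorizations $\psi_{j,k}(\alpha)=(t_{j+k-1}-\alpha)\psi_{j,k-1}(\alpha)$ and $\psi_{j-1,k}(\alpha)=(t_j-\alpha)\psi_{j,k-1}(\alpha)$ are used, and the affine identity collapses the weight to $(x-\alpha)$.

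For (c) you take a genuinely different and more direct route. The paper divides Marsden's identity by $(k-1)!$, differentiates $\nu-1$ times in $\alpha$, and combines the resulting formulas with the Taylor expansion of an arbitrary polynomial $q$ of order $k$ to obtain an explicit dual representation $q=\sum_j\lambda_{j,k}[q]B_{j,k,t}$; the partition of unity then drops out by setting $q=1$. You instead read off the coefficient of $\alpha^{k-1}$ on both sides of Marsden's identity, which immediately yields $\sum_j B_{j,k,t}(x)=1$. Your argument is shorter and entirely sufficient for the stated claim; the paper's longer route has the side benefit of exhibiting the dual functionals $\lambda_{j,k}$ and showing that every polynomial of order $k$ lies in the span of the $B_{j,k,t}$, which is what the subsequent remark about Marsden's identity actually exploits.
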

	
	\begin{remark}\label{CurryScho}
		Marsden's Identity says something very important. That all polynomials of order $k$ are contained in the set generated by the B-splines $B_{j,k}$. Another consequence of Marsden's Identity is for B-splines of order $k$ given a sequence of knots $(t_{i})_{i=0}^{N}$, we have that the number of continuity conditions at $t_{i}$ plus the multiplicity of $t_{i}$ equals $k$. Therefore, for a simple knot $t_{i}$, any B-spline of order $k$ is continuous and has $k-2$ continuous derivatives. On the other hand, if $t_{i}$ has multiplicity $k$, any $k$-th order B-spline has a discontinuity there.
	\end{remark}
	
	\begin{definition}\label{Bsplinespace}
		Let $(t_{i})_{i=0}^{N}$ be a sequence of knots such that $t_{0}=\dots=t_{k-1}$ and $t_{N-k+1}=\dots=t_{N}$, where $1\leq k\leq N$. Let $B_{j,k,t}$ be the B-splines as defined in \ref{bsplinedef} with knot sequence $t=(t_{i})_{i=0}^{N}$. The set generated by the sequence $\{B_{j,k,t}:\text{all} \ j\}$, denoted by $\SSS_{k,t}$, is the set of splines of order $k$ with knot sequence $t$. In symbols we have
		\begin{equation*}
		\SSS_{k,t}=\left\{ \sum_{j}a_{j}B_{j,k,t}:a_{j}\in \R, \ \text{all} \ j  \right\}
		\end{equation*}
	\end{definition}
	
	\begin{remark}\label{Bsplinedense}
		Fix an interval $[a,b]$. Let $T_{N}=(t_{i})_{i=0}^{N}$ be a sequence as in definition \ref{Bsplinespace} with $t_{0}=a$ and $t_{N}=b$, where $N\in \NN$. The choice in definition \ref{Bsplinespace} implies that
		\begin{equation*}
		\bigcup_{N\in \NN}S_{k,T_{N}} \quad \text{is dense in} \quad C([a,b])
		\end{equation*}
	\end{remark}
	
	\subsection{Derivatives of B-spline functions}
	
	The derivative of a $k$-th order B-spline is directly associated with B-splines of order $k-1$. To see this we use the recurrence relation which leads us to the following theorem:
	
	\begin{theorem}\label{Bsplinederiv}
		Let $B_{j,k,t}$ be the function as defined in \ref{bsplinedef}. The support of $B_{j,k,t}$ is the interval $[t_j,t_{j+k})$. Then the following equation holds on the open interval $(t_j,t_{j+k})$
		\begin{equation*}
		\frac{d}{d \theta}B_{j,k,t}(\theta)=
		\begin{cases}
		0, \ &k=1\\
		(k-1)\left( \dfrac{B_{j,k-1,t}(\theta)}{t_{j+k-1}-t_j}-\dfrac{B_{j+1,k-1,t}(\theta)}{t_{j+k}-t_{j+1}} \right), \ &k>1
		\end{cases}
		\end{equation*} 
	\end{theorem}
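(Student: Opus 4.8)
The plan is to work directly from the divided-difference definition of the B-splines in Definition \ref{bsplinedef} rather than from the recurrence relation, because differentiation in the parameter $x$ interacts cleanly with the truncated power function. The case $k=1$ is immediate: $B_{j,1,t}$ is the characteristic function of $[t_j,t_{j+1})$, hence constant (equal to $1$) on the open interval $(t_j,t_{j+1})=(t_j,t_{j+k})$, so its derivative vanishes there. For $k>1$ I would start from
\[
B_{j,k,t}(x)=(t_{j+k}-t_j)\,[t_j,\dots,t_{j+k}](\cdot-x)_+^{k-1}
\]
and differentiate in $x$.

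First I would compute the $x$-derivative of the truncated power: for $k\ge 2$ and $x$ not equal to one of the knots, $\frac{d}{dx}(t-x)_+^{k-1}=-(k-1)(t-x)_+^{k-2}$. Passing this through the (finite-support) divided-difference functional gives
\[
B_{j,k,t}'(x)=-(k-1)(t_{j+k}-t_j)\,[t_j,\dots,t_{j+k}](\cdot-x)_+^{k-2}.
\]
Next I would apply the divided-difference recursion of Lemma \ref{recuralgo}(b) to split the $(k{+}1)$-point divided difference,
\[
(t_{j+k}-t_j)\,[t_j,\dots,t_{j+k}](\cdot-x)_+^{k-2}=[t_{j+1},\dots,t_{j+k}](\cdot-x)_+^{k-2}-[t_j,\dots,t_{j+k-1}](\cdot-x)_+^{k-2}.
\]
Finally I would recognize each $k$-point divided difference on the right as an order-$(k-1)$ B-spline, again via Definition \ref{bsplinedef}: namely $[t_j,\dots,t_{j+k-1}](\cdot-x)_+^{k-2}=B_{j,k-1,t}(x)/(t_{j+k-1}-t_j)$ and $[t_{j+1},\dots,t_{j+k}](\cdot-x)_+^{k-2}=B_{j+1,k-1,t}(x)/(t_{j+k}-t_{j+1})$. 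Substituting these back and distributing the factor $-(k-1)$ yields exactly
\[
B_{j,k,t}'(x)=(k-1)\left(\frac{B_{j,k-1,t}(x)}{t_{j+k-1}-t_j}-\frac{B_{j+1,k-1,t}(x)}{t_{j+k}-t_{j+1}}\right)
\]
on the open interval $(t_j,t_{j+k})$, as claimed.

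The main obstacle is justifying moving $\frac{d}{dx}$ inside the divided difference $[t_j,\dots,t_{j+k}]$. When the knots are distinct the divided difference is a fixed finite linear combination of point evaluations $g\mapsto g(t_i)$, so the interchange is termwise and immediate (and the derivative identity above holds at every $x$ distinct from the finitely many knots, which is all we need since we only prove the formula in the interior away from the knots). When knots are repeated the functional also involves $t$-derivatives of the integrand at the coalescing knots, and one must argue, e.g.\ via the confluent form in Lemma \ref{recuralgo}(a) or by letting repeated knots separate and passing to the limit using continuity of both sides in the knot vector, that the interchange still holds. I would therefore establish the identity first for distinct knots and then extend by continuity. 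An alternative route, differentiating Marsden's identity in $x$ and matching coefficients against the order-$(k-1)$ Marsden expansion, reproduces the same algebra, but there the obstacle shifts to justifying the coefficient matching, since the dual polynomials $\psi_{j,k}(\alpha)$ are not globally linearly independent and one must localize to a single knot interval on which only finitely many B-splines are supported; the divided-difference argument avoids this complication, which is why I prefer it.
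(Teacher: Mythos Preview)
Your argument is correct and complete (modulo the interchange justification you already flag), but it follows a genuinely different route from the paper's proof. The paper proceeds by induction on $k$: the base case $k=1$ is the same as yours, and for the inductive step one differentiates the recurrence relation of Lemma~\ref{Bsplinerecdef} using the product rule, then invokes the inductive hypothesis for $B_{j,k-1}'$ and $B_{j+1,k-1}'$ and simplifies (using the order-$(k-1)$ recurrence once more to reassemble the four resulting order-$(k-2)$ terms). Your approach, by contrast, avoids induction entirely by differentiating the divided-difference definition in the parameter $x$ and then applying a single instance of Lemma~\ref{recuralgo}(b). What your route buys is conceptual directness and essentially no algebra; what it costs is the need to justify passing $\tfrac{d}{dx}$ through the divided-difference functional, which as you note is trivial for simple knots and can be handled by a limiting argument for repeated ones. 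The paper's inductive route trades that analytic step for a somewhat longer algebraic simplification but stays entirely within identities already established in the text.
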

	
	\begin{proof}
		The proof is done by induction on $k$. For $k=1$ it is straightforward since $B_{j,1,t}$ is a constant on $(t_{j},t_{j+1})$ and for $k>1$ we use the recurrence relation described in lemma \ref{Bsplinerecdef}.
	\end{proof}
	
	Using the above formula we can obtain bounds for higher derivatives of B-splines. By construction of the space $\SSS_{k,t}$, the B-splines form a partition of unity on $[t_{0},t_{N}]$, and since they are strictly positive on the interior of their supports, we have that each B-spline is bounded by 1 for all $\theta$. Furthermore, by induction we can prove the following lemma:
	\begin{lemma}\label{Bsplinederbound}
		Let $t=(t_{i})_{i=0}^{N}$ be a sequence of knots as in definition \ref{Bsplinespace} and $B_{j,k,t}$ be the function as defined in \ref{bsplinedef}. Let $h_{N}=\ds\min_{k\leq i\leq N-k+1 }(t_{i}-t_{i-1})$ and $\alpha$ be a positive integer such that $\alpha<k-1$. Then, on the open interval $(t_{j},t_{j+k})$ we have
		\[
		\sup_{\theta \in (t_{j},t_{j+k})}\left|\frac{d^{\alpha}}{d\theta^{\alpha}}B_{j,k,t}(\theta)\right|\leq \dfrac{2^{\alpha}}{h_{N}^{\alpha}}\dfrac{(k-1)!}{(k-\alpha-1)!}, \text{for any } j
		\]
	\end{lemma}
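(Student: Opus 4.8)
The plan is to argue by induction on the order $\alpha$ of the derivative, keeping the order $k$ of the B-spline as a parameter and using the first-derivative formula of Theorem \ref{Bsplinederiv} as the engine. Throughout I would invoke two facts recorded just before the statement: that each B-spline satisfies $B_{j,k,t}(\theta)\le 1$ for all $\theta$ (from strict positivity together with the partition-of-unity property), and that the knot differences appearing in the denominators of the derivative formula are bounded below by $h_{N}$ whenever they do not vanish.

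For the base case $\alpha=1$, I would apply Theorem \ref{Bsplinederiv} directly and use the triangle inequality: on $(t_j,t_{j+k})$,
\[
\left|\frac{d}{d\theta}B_{j,k,t}(\theta)\right| \le (k-1)\left(\frac{B_{j,k-1,t}(\theta)}{t_{j+k-1}-t_j}+\frac{B_{j+1,k-1,t}(\theta)}{t_{j+k}-t_{j+1}}\right)\le (k-1)\cdot\frac{2}{h_{N}},
\]
which is exactly $\tfrac{2^{1}}{h_{N}^{1}}\tfrac{(k-1)!}{(k-2)!}$. For the inductive step I would write $\frac{d^{\alpha}}{d\theta^{\alpha}}B_{j,k,t}=\frac{d^{\alpha-1}}{d\theta^{\alpha-1}}\bigl(\frac{d}{d\theta}B_{j,k,t}\bigr)$, push the operator $\frac{d^{\alpha-1}}{d\theta^{\alpha-1}}$ through the linear combination supplied by Theorem \ref{Bsplinederiv}, and then bound the two resulting terms $\frac{d^{\alpha-1}}{d\theta^{\alpha-1}}B_{j,k-1,t}$ and $\frac{d^{\alpha-1}}{d\theta^{\alpha-1}}B_{j+1,k-1,t}$ by the inductive hypothesis applied to B-splines of order $k-1$. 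Here the assumption $\alpha<k-1$ is precisely what is needed: it guarantees $\alpha-1<(k-1)-1$, so the hypothesis is legitimately available at order $k-1$. Collecting the prefactor $(k-1)$, the two reciprocal-gap factors each bounded by $1/h_{N}$, and the inductive bound $\tfrac{2^{\alpha-1}}{h_{N}^{\alpha-1}}\tfrac{(k-2)!}{(k-\alpha-1)!}$ gives $(k-1)\cdot\tfrac{2}{h_{N}}\cdot\tfrac{2^{\alpha-1}}{h_{N}^{\alpha-1}}\tfrac{(k-2)!}{(k-\alpha-1)!}=\tfrac{2^{\alpha}}{h_{N}^{\alpha}}\tfrac{(k-1)!}{(k-\alpha-1)!}$, closing the induction.

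The step I expect to require the most care is the lower bound $t_{j+k-1}-t_j\ge h_{N}$ on the denominators, because of the repeated boundary knots $t_0=\dots=t_{k-1}$ and $t_{N-k+1}=\dots=t_N$, which make some differences collapse to zero. My plan is to dispose of this by distinguishing two cases under the usual convention. If a denominator such as $t_{j+k-1}-t_j$ vanishes, then the interval $[t_j,t_{j+k-1})$ is empty, so the accompanying B-spline $B_{j,k-1,t}$ is identically zero and the term is read as $0$ and discarded. If instead the denominator is positive, I would write it as $t_{j+k-1}-t_j=\sum_{i=j+1}^{j+k-1}(t_i-t_{i-1})$; since the vanishing gaps occur only among the repeated boundary knots, i.e.\ at indices outside the range $k\le i\le N-k+1$, every positive gap lies in exactly the interior range over which $h_{N}$ is the minimum and is therefore $\ge h_{N}$, whence the entire positive sum is $\ge h_{N}$. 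With this convention the term-by-term estimates above become rigorous for every admissible $j$, including those whose supports abut the boundary, and the induction carries through uniformly in $j$.
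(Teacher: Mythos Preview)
Your proposal is correct and follows essentially the same route as the paper: induct on $\alpha$, use the first-derivative formula of Theorem \ref{Bsplinederiv} to reduce to order $k-1$ B-splines, and apply the hypothesis at level $\alpha-1$. The paper's proof is terser and simply asserts the denominator bound; your treatment of the degenerate denominators arising from the repeated boundary knots, via the convention that a vanishing gap forces the accompanying lower-order B-spline to be identically zero, is a careful addition the paper omits.
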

	
	\begin{proof}
		The result is obtained by fixing $k$ and doing induction on $\alpha$.
	\end{proof}
	
	\subsection{Lagrange interpolation}
	
	The following two theorems can be found in \cite{Atkinson}.
	\begin{theorem}\label{approx}
		The Lagrange interpolating polynomial $q(x)$ of $f:[a,b]\rightarrow \mathbb{R}$, given distinct points 
		$a=x_{0}<x_{1}<...<x_{l}=b$  and $l+1$ ordinates $y_{i}=f(x_{i})$, $i=0,\dots,l$, is defined as
		\begin{equation}\label{lagrp}
		q(x)=\sum_{i=0}^{l}y_{i}l_{i}(x) \quad \text{with} \quad l_{i}(x) =\ds\prod_{j\neq
			i}\left(\frac{x-x_{j}}{x_{i}-x_{j}}\right)\,,\;i=0,1,\dots,l.
		\end{equation}
	\end{theorem}

	\begin{theorem}\label{interpthm}
		Let $f,q$ and $(x_i,y_i)$, $i=0,\dots,l$, be as in Theorem \ref{approx}. Suppose that $f$ has $l+1$ continuous derivatives on $(a,b)$. Then for every $x \in [a,b]$ there exists $\xi \in (a,b)$ such that
		\begin{equation*}%\label{interrror}
		f(x)-q(x)= \dfrac{\prod_{i=0}^{l}(x-x_{i})}{(l+1)!}f^{(l+1)}(\xi).
		\end{equation*}
	\end{theorem}
	
	We next state an elementary lemma that provides the estimate of the interpolation error when
	information on the derivatives of $f$ is available.

	\begin{lemma}\label{estinterrlmm}
		Let $f, q$, and $(x_i,y_i)$, $i\in \{0,\dots,l\}$, be as in Theorem \ref{approx}. Suppose that	for some $C>0$ we have $\ds \sup_{x \in [a,b]}|f^{(l+1)}(x)| \leq C$, and $\overline{\Delta x}=\sup_{i}(x_{i+1}-x_{i})$. Then
		\begin{equation*}%\label{estinterr}
		\max_{x \in [a,b]}|f(x)-q(x)|\leq C  \dfrac{(\overline{\Delta x})^{l+1}}{4(l+1)}.
		\end{equation*}
	\end{lemma}

	%--------------------------------------------------------------------
\end{document}